\definecolor{crimsonglory}{rgb}{0.75, 0.0, 0.2}
\definecolor{darkpowderblue}{rgb}{0.0, 0.2, 0.6}
\theoremstyle{plain}
\newtheorem{theorem}{Theorem}[section] 
\newtheorem{definition}[theorem]{Definition} 
\newtheorem{prop}[theorem]{Proposition}
\newtheorem{lemma}[theorem]{Lemma}
\newtheorem{remark}[theorem]{Remark}
\newtheorem{remarks}[theorem]{Remarks}
\DeclareMathOperator{\homm}{Hom}
\DeclareMathOperator{\End}{End}
\DeclareMathOperator{\GL}{GL}
\DeclareMathOperator{\SL}{SL}
\DeclareMathOperator{\diag}{diag}
\DeclareMathOperator{\tr}{tr}
\DeclareMathOperator{\spec}{Spec}
\DeclareMathOperator{\ram}{Ram}
\DeclareMathOperator{\disc}{disc}
\DeclareMathOperator{\zar}{Zar}
\DeclareMathOperator{\chara}{char}
\DeclareMathOperator{\crys}{crys}
\DeclareMathOperator{\ord}{ord}
\DeclareMathOperator{\arch}{arch}
\DeclareMathOperator{\ssing}{ssing}
\newcommand{\Sum}[2]{\displaystyle\sum_{#1}^{#2}}
\newcommand{\Z}{\mathbb{Z}}
\newcommand{\N}{\mathbb{N}}
\newcommand{\C}{\mathbb{C}}
\newcommand{\Q}{\mathbb{Q}}
\newcommand{\R}{\mathbb{R}}
\newcommand{\F}{\mathbb{F}}
\newcommand{\CE}{\mathcal{E}}
\newcommand{\CX}{\mathcal{X}}
\newcommand{\ld}{,\ldots,}
\newcommand{\CP}{\mathcal{P}}
\newcommand{\CO}{\mathcal{O}}
\newcommand{\unr}{unr}
\DeclareSymbolFont{cyrletters}{OT2}{wncyr}{m}{n}
\DeclareMathSymbol{\Sha}{\mathalpha}{cyrletters}{"58}
\newcommand{\address}[1]{\gdef\@address{#1}}
\newcommand{\email}[1]{\gdef\@email{\url{#1}}}
\newcommand{\@endstuff}{\par\vspace{\baselineskip}\noindent\small
	\begin{tabular}{@{}l}\scshape\@address\\\textit{E-mail address:} \@email\end{tabular}}
\title{On the $v$-adic values of G-functions II:\\
	\Large Towards Effective Brauer-Siegel}
	\author{Georgios Papas}
\address{Faculty of Mathematics and Computer Science\\
	The Weizmann Institute of Science\\
	234 Herzl Street, Rehovot 76100, Israel, and\\\\	
	
	Institute for Advanced Study\\
	1 Einstein Drive\\
	Princeton, N.J. 08540\\
	U.S.A.}
\email{georgios.papas@weizmann.ac.il, gpapas@ias.edu}
\newcommand{\mathsym}[1]{{}}
\newcommand{\unicode}[1]{{}}
\begin{document}
	\maketitle
	
	\begin{abstract}This is the second paper in a series by the author, centered on the study of values of G-functions associated to a $1$-parameter family of abelian varieties $f:\CX\rightarrow S$ and a point $s_0\in S(K)$ over some number field $K$. 
		
		Here we study the case where $f:\CX\rightarrow S$ is a family of elliptic curves. Extending work of Andr\'e and Beukers, we construct relations among the values of G-functions in this setting at points whose fibers are CM elliptic curves.\end{abstract}
	
	\section{Introduction}
In this installment of our series, we study the case of a $1$-parameter family of elliptic curves and the values of the associated G-functions at points whose fibers are CM elliptic curves. 
 
\subsection{Main Result}

The main geometric picture we consider is that of a family $f:\CE\rightarrow S$ of elliptic curves over a smooth irreducible curve $S$ defined over a number field $K$. We consider also a point $s_0\in S(K)$ together with a uniformizer $x\in K(S)$ at $s_0$. By work of Y. Andr\'e, see \cite{andre1989g}, we may associate to the above picture a family of G-functions ``centered at $s_0$''.

Given a place $v$ of a number field $L$ we denote by $\iota_v:L\hookrightarrow \C_v$ the embedding associated to it. Towards the above direction, our main result, modulo some technical considerations, may be summarized as the following:
\begin{theorem}\label{naiverelations}
	Let $f:\CE\rightarrow S$ and $s_0$ be as above and assume that $\CE_{s_0}$ is CM. Let $Y_G$ be the family of G-functions associated to the pair $(\CE\rightarrow S,s_0)$. 
	
	If $v\in \Sigma_{K(s)}$, a place of $K(s)$, is such that $s$ is $v$-adically close to $s_0$, then there exists a polynomial $R_{s,v}\in\bar{\Q}[\underline{X}]$ for which $\iota_v(R_{s,v}(Y_G(s)))=0$ and $R_{s,v}(Y_G(x))\neq0$ on the functional level. 
	
	Moreover, if $v$ is a place of good ordinary reduction of the fiber $\CE_{s_0}$ then $R_{s,v}$ is independent of such $v$.
\end{theorem}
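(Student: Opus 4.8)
The plan is to produce $R_{s,v}$ from the algebraic relations that complex multiplication imposes on the realizations of $\CX_s$, and then to transport these relations to a $v$-adic identity among the values $Y_G(s)$ via the G-function uniformization of the Gauss--Manin connection near $s_0$ together with the $p$-adic comparison isomorphism at $v$. First, because the CM field is locally constant along a family of elliptic curves acquiring complex multiplication, the hypothesis that $s$ is of the same special type as $s_0$ forces $\CX_s$ to have CM by an order in $\Q(\sqrt{-3})$; hence there is an isogeny $\lambda\colon \CX_s\to\CX_{s_0}$ defined over $\bar{\Q}$, and $\Q(\sqrt{-3})$ acts on the de Rham and Betti realizations of both curves. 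Choosing algebraic de Rham frames adapted to the CM eigenspace decomposition, the CM action on $H^1_{\mathrm{dR}}$ is in each case given by the fixed matrix $\diag(\zeta_3,\zeta_3^{2})$, while $\lambda^{*}$ is block diagonal with entries in $\Q(\sqrt{-3})$; on homology the corresponding matrices are $\Q$-rational. Together these data encode an algebraic relation on the period matrix of $\CX_s$ whose coefficients depend only on $\End(\CX_s)$, on the isogeny data, and on the fixed numbers $\zeta_3,\zeta_3^{2}$.

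Next, by the construction of $Y_G$ (see \cite{papaspadicpart1}), the period matrix of the fibres of $Y_G$ near $s_0$ factors through $Y_G(t)$ and the period matrix of the fibre at $s_0$, where the entries of $Y_G(t)$ are the G-functions attached to $(\CX\to S,s_0)$, i.e. the solutions of the Gauss--Manin connection normalized at $s_0$. When $s$ is $v$-adically close to $s_0$ the power series $Y_G(t)$ converge at $s$, and, by the crystalline comparison at places of good reduction — or the $B_{\mathrm{dR}}$-comparison in general — the same factorization holds $v$-adically with the $v$-adic period matrices in place of the complex ones. Substituting this factorization into the relation of the previous paragraph, and using that $\CX_{s_0}$, being itself CM by $\Q(\sqrt{-3})$, has a $v$-adic period matrix satisfying the very same CM relations with the same fixed eigenvalues, the $\CX_{s_0}$-contribution cancels and one is left with a polynomial identity among $\iota_v(Y_G(s))$ with coefficients in $\bar{\Q}$; this is $R_{s,v}$. (It is an algebraic relation, so it also holds at the archimedean places where $Y_G(s)$ converges, but it is the $v$-adic incarnation that is recorded here, as this is what feeds the height estimates.)

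If $R_{s,v}(Y_G(x))$ vanished identically then, by analytic continuation over $S$, the corresponding relation would hold for the generic fibre of $f$, producing an endomorphism of the Gauss--Manin connection not proportional to the identity, hence extra endomorphisms on the geometric generic fibre of $\CX\to S$; this contradicts non-isotriviality, the isotrivial case — in which the associated G-functions are algebraic — being excluded among the technical hypotheses. One must still check that the relation produced above is genuinely nondegenerate, i.e. that it uses the quadratic CM relation $X^{2}+X+1=0$ and not merely $\Q$-linear structure shared by every fibre, and this is exactly where working with $\Q(\sqrt{-3})$, with its extra automorphisms, keeps the bookkeeping transparent. For the last assertion: when $v$ is a place of good ordinary reduction of $\CX_{s_0}$ the field $\Q(\sqrt{-3})$ splits at $v$, the unit-root decomposition of $H^1_{\crys}$ coincides with the CM eigenspace decomposition, and the CM action on the two lines is again by the $v$-independent algebraic numbers $\zeta_3,\zeta_3^{2}$; since the only a priori $v$-dependent ingredient, the $v$-adic period matrix of $\CX_{s_0}$, was already eliminated, the polynomial $R_{s,v}$ depends only on $s$ — through $\End(\CX_s)$ and the isogeny data — and not on such $v$.

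The main obstacle is the second step: making the $v$-adic period factorization precise — identifying the values $Y_G(s)$ with the matrix of the $p$-adic comparison for $\CX_s$ relative to that for $\CX_{s_0}$, compatibly with both the Gauss--Manin connection and the CM action — and carrying out the cancellation of the $\CX_{s_0}$-data cleanly enough that the coefficients of $R_{s,v}$ land in $\bar{\Q}$ and are visibly independent of $v$ at ordinary primes. This rests on the compatibility of the Gauss--Manin connection with the crystalline and $B_{\mathrm{dR}}$ comparisons and with the CM structure, and is the technical heart of the argument; the non-isotriviality input used for the functional nonvanishing is comparatively routine.
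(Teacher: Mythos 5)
There is a genuine gap, and it sits at the very first step. You assume that because $s$ is ``of the same special type'' and $v$-adically close to $s_0$, the fiber $\CX_s$ must have CM by (an order in) $\Q(\sqrt{-3})$, ``the CM field being locally constant,'' and you then build everything on the resulting isogeny $\lambda\colon\CX_s\to\CX_{s_0}$. This is false: CM points with arbitrary imaginary quadratic fields accumulate $v$-adically (and archimedeanly) around $s_0$, and the theorem is precisely about all CM points $s$, not those isogenous to $\CX_{s_0}$. The coincidence of CM fields is forced only when $v$ is a place of \emph{ordinary} reduction of $\CX_{s_0}$ (see the remark following \Cref{thmordrels}: both endomorphism algebras embed into $\End^0(\tilde{E}_{0,v})$, which is a quadratic field in the ordinary case); at supersingular and archimedean places the two CM fields are in general different, no isogeny exists, and your cancellation of the $\CX_{s_0}$-periods never gets off the ground. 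Indeed, if the CM fields always agreed, the application to Brauer--Siegel would be empty, and the paper handles that special case without G-functions at all, by isogeny estimates (\Cref{propisogenouscm}). The actual proof of the theorem is a case-by-case construction: the ordinary case (\Cref{thmordrels}), where your isogeny picture is essentially correct and yields the clean, $v$-independent relation $X_{1,2}$ via the Frobenius-eigenvector basis of \Cref{lemmacrysordbasis} and the diagonal period matrix of \Cref{lemmacmperiods}; Andr\'e's supersingular relations (\Cref{propandresupercm}), which exploit the embedding of both CM fields into the quaternion algebra $D_{p,\infty}$ and genuinely depend on $v$ through $m_{s,v}$; Beukers' archimedean relations (\Cref{propbeukersarch}); and, most importantly, the ramified places $v\mid 3$ (\Cref{propramifiedcm}), which are the main new content of the paper and require Andr\'e's $p$-adic Betti lattice, the Ogus--Coleman--Urfels algebraicity of $\pi_{1,1}$, and an explicit case analysis whose non-triviality is verified by Gr\"obner-basis computations. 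None of these three cases is addressed by your argument.

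A secondary issue is the functional non-vanishing. You argue that identical vanishing of $R_{s,v}(Y_G(x))$ would produce extra endomorphisms of the Gauss--Manin connection, contradicting non-isotriviality; as stated this is a heuristic, since a polynomial relation among the entries of $Y_G$ does not directly yield an endomorphism of the connection. The paper instead uses \Cref{trivialrelations}: the Zariski closure of $Y_G$ is cut out exactly by $\det-1$, so functional non-vanishing is equivalent to $R_{s,v}\notin\langle X_{1,1}X_{2,2}-X_{1,2}X_{2,1}-1\rangle$, which is then checked concretely (test matrices in the supersingular and archimedean cases, the appendix computation in the ramified case). For the ordinary case this check is immediate for $X_{1,2}$, so your claim of $v$-independence there is fine; but the theorem as stated requires a relation at \emph{every} place of proximity, and that is where the proposal does not go through.
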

\begin{remark} By ``$v$-adically close'' we mean that $s$ is within the radius of convergence of the power series in the family $Y_G$ of G-functions associated to our pair. 
	
\end{remark}

The problem of constructing polynomials, such as the $R_{s,v}$ of \Cref{naiverelations}, in the above setting is the most studied of its kind. Indeed, Y. Andr\'e, see \cite{andremots} and F. Beukers, see \cite{beukers}, have already given independently, and with different methods, partial answers to this. 

In more detail, F. Beukers constructed in \cite{beukers}, under assumptions on the morphism $f:\CE \rightarrow S$, polynomials $R_{s,v}$ as in \Cref{naiverelations} when $v$ is an archimedean place of $K(s)$ and when $v$ is finite place with $\displaystyle{v\not|2}$, $3$. On the other hand, Y. Andr\'e in \cite{andremots} constructed polynomials $R_{s,v}$ as in \Cref{naiverelations} under the assumptions that 
\begin{enumerate}
\item $v$ is a place of supersingular reduction of the ``central fiber'' $\CE_{s_0}$ and 
\item $v$ is over some prime $p\in \Q$ which is unramified in the CM field $\End^{0}_{\bar{\Q}}(\CE_{s_0})$.
\end{enumerate}

Following the work of Andr\'e and Beukers, the truly new $R_{s,v}$ in \Cref{naiverelations} are those for places $v$ such that 
\begin{enumerate}
	\item $v$ is over some prime $p\in \Q$ which ramifies in $\End^{0}_{\bar{\Q}}(\CE_{s_0})$, or 
	\item $v$ is a place over which the central fiber $\CE_{s_0}$ has ordinary reduction.
\end{enumerate}

A striking new feature of our construction is that, for $v$ over which $\CE_{s_0}$ has ordinary reduction, the relations we construct are independent of the place $v$. More precisely, we obtain a single polynomial $R_{s,\ord}$, for which $\iota_v(R_{s,\ord}(Y_G(s)))=0$ for all such places for which $s$ and $s_0$ are $v$-adically close. In particular, this polynomial can be constructed from any one such place $v$ and does not depend on the number of such places. Equivalently, the coefficients of $R_{s,\ord}$ are independent of $v$. This phenomenon appears to reflect a previously unnoticed structural property of the G-functions method, and persists in all other cases considered in this series of papers.
	
	\subsection{Height bounds and applications}\label{section:heightboundsintro}

The modern study of the values of G-functions at points of ``special interest'' has its beginnings at the work of E. Bombieri, see \cite{bombg}, and Y. Andr\'e, see \cite{andre1989g}. The ultimate goal of the ``G-functions method'', developed by Y. Andr\'e in \cite{andre1989g}, is to establish height bounds for such points of special interest.

In the geometric setting described in \Cref{naiverelations} the height bounds one gets are of the form \begin{equation}\label{eq:naiveheightboud}
	h(s)<< \deg(\prod_{v}^{} R_{s,v})^{c_0},
\end{equation}where the product runs over all places in $K(s)$ for which $s$ is $v$-adically close to $s_0$. 

These height bounds are connected with Siegel's lower bounds for the class numbers of imaginary quadratic number fields. Indeed, were we to know that the degree of the polynomial on the right hand side of \eqref{eq:naiveheightboud} was bounded polynomially by $[\Q(s):\Q]$, we would have an effective version of Siegel's bounds in \cite{siegelog} upon combining our height bound with results of Masser-W\"ustholz. For more on this we refer the reader to the introduction of \cite{papaseffbrsieg}.

While being unable to obtain such an upper bound for the aforementioned degree, in \Cref{section:applications} we discuss the implications of our relations to the search for an effective version of Siegel's result from the point of view of the G-functions method. In short, we show that the problem may be reduced to finding a bound for the size of the subset $\CP(s,s_0)$ of $\Sigma_{K(s)}$ that consists of places $v$ for which 
\begin{enumerate}
\item $s$ is $v$-adically close to $s_0$ and 
\item $\CE_{s_0}$ has supersingular reduction over $v$ and 
\item $v$ is over some prime $p\in \Q$ which is unramified in $\End^{0}_{\bar{\Q}}(\CE_{s_0})$. 
\end{enumerate}
Alternatively, we would like to have a measure of dependence on $v$ for precisely those $R_{s,v}$ in \Cref{naiverelations} that were constructed by Y. Andr\'e in \cite{andremots}.
	\subsection{Outline of the paper}
We start in \Cref{section:background} with some foundational results on the periods of CM elliptic curves, either the archimedean periods coming from the de Rham-Betti comparison isomorphism or the non-archimedean ones coming from the de Rham-crystalline comparison isomorphism. Following this, in \Cref{section:backgroundgfuns} we briefly review the basic framework through which one may associate a family of G-functions to a $1$-parameter family of elliptic curves. 

Next, in \Cref{section:padicprox} we discuss briefly a generalization of \cite{grosszagier}, due to Lauter-Viray \cite{lauterviray}, and record an application relevant to places of proximity over ramified primes of the CM field of endomorphisms of the central fiber. \Cref{section:cmpadicrels} consists of a proof of \Cref{naiverelations} working on a case-by-case basis. More specifically, we describe the $v$-adic relations for $v$ a place of ordinary reduction of the ``central fiber'' $\CE_{s_0}$ of our family of elliptic curves. We also record here the relations of Andr\'e, in the supersingular case, and Beukers, in the archimedean setting. We conclude this section by describing the relations for places over primes ramified in the field $\End^{0}_{\bar{\Q}}(\CE_{s_0})$.

Finally, in \Cref{section:applications}, we discuss the height bounds that we alluded to in \Cref{section:heightboundsintro}. We conclude our exposition by discussing the implications of our height bounds to the search for an effective version of Siegel's result.

\subsection{Notation}
Let $E$ be an elliptic curve over a number field $K$ and $v\in\Sigma_{K}$ a place of $K$. We write $E_v$ for the base change $E\times_{K} K_v$. If $v$ is a finite place of good reduction we will write $\tilde{E}_v$ for the reduction of $E$ modulo $v$. 

Given a family of power series $\mathcal{Y}:=(y_1\ld y_N)\in K[[x]]$ where $K$ is as above, and $v\in \Sigma_{K}$ is some place of $K$, we will write $R_v(y_{j})$ for the $v$-adic radius of convergence of $y_j$. We also set $R_{v}(\mathcal{Y}):=\min R_v(y_j)$. Given $v$ as above, we write $\iota_v:K\hookrightarrow \C_v$ for the associated embedding into $\C_v$, which will stand for either $\C$ or $\C_p$ depending on whether the place $v$ is archimedean or not. Finally, if $y(x)=\Sum{n=0}{\infty}a_n x^n\in K[[x]]$ is a power series as above we will write $\iota_v(y(x)):=\Sum{n=0}{\infty}\iota_v(a_n)x^n$ for the corresponding power series in $\C_v[[x]]$.\\

\textbf{Acknowledgments:} The author thanks Yves Andr\'e for answering some questions about his work, as well as for pointing him towards \cite{grosszagier} that greatly influenced the direction of the exposition here. The author also thanks Chris Daw for his encouragement and many helpful discussions around the G-functions method. The author finally thanks Francesco Saettone for helpful comments that improved the readability of our exposition.

Work on this project started when the author was supported by Michael Temkin's ERC Consolidator Grant 770922 - BirNonArchGeom. Throughout the majority of this work, the author received funding by the European Union (ERC, SharpOS, 101087910), and by the ISRAEL SCIENCE FOUNDATION (grant No. 2067/23). In the final stages of this work, the author was supported by the Minerva Research Foundation Member Fund while in residence at the Institute for Advanced Study for the academic year $2025$-$26$. 

\section{Period matrices of CM elliptic curves}\label{section:background}

We start in this section with some general facts about the arithmetic of our main objects of study, CM elliptic curves and their periods.

Associated to any elliptic curve $E$ defined over a number field $K$ we will get, for each place $v\in\Sigma_{K}$, a certain period matrix. This will be nothing but the matrix associated to a comparison isomorphism between two cohomology theories, either de Rham and Betti or de Rham and crystalline, and some bases for the respective cohomology groups. In the sequel it will be useful for us to work with particular bases of $H^1_{dR}(E/K)$. We chose to give these the following:
\begin{definition}\label{hodgebasis}Let $E$ be an elliptic curve defined over a number field $K$.  We call an ordered basis $\Gamma_{dR}(E):=\{\omega,\eta\}$ of $H^1_{dR}(E/K)$ a \textbf{Hodge basis} if the following are true:
	\begin{enumerate}
		\item $\omega\neq 0$ is in the first part of the filtration $F^1=e^{*}\Omega_{E/K}\subset H^1_{dR}(E/K)$, and 
		
		\item $\eta$ is such that $\langle \omega,\eta\rangle=1$ with respect to the Riemann bilinear form.
	\end{enumerate}
\end{definition}

\subsection{Canonical Crystalline bases}\label{section:bases}

For the remainder of this section we let  $E/K$ be a CM elliptic curve defined over a number field $K$ and let $v\in \Sigma_{K}$ be a place of $K$. We will work throughout under the further assumptions\footnote{These can be achieved upon base changing $E$ by a finite extension $L/K$. The ``everywhere good reduction'' condition follows from \cite{serretate}. We also note here that the assumption that $\End^{0}_K(E)=\End^0_{\bar{\Q}}(E)$ is equivalent to the assumption that $F\subset K$, see for example the Remark at the beginning of $\S 10$ in \cite{langellfuns}.}that $E$ has everywhere good reduction, that $F:=\End_{\bar{\Q}}^{0}(E)=\End_{K}^{0}(E)$, where $[F:\Q]=2$ is the CM field of endomorphisms of $E$, and that $F\subset K$. 

As per usual notational conventions in $p$-adic Hodge theory we let $K_{v,0}:=W(k_v)[\frac{1}{p}]$, where $k_v$ is the residue field of $K$ at $v$ and $p$ stands for its characteristic. We also denote by $q:=p^f$ the cardinality of $k_v$. From now on, we will also write \begin{equation}
		H^1_v(E):= \begin{cases}H^1_{\crys}(\tilde{E}_v/W(k_v))\otimes K_{v,0}& \text{if }v\in\Sigma_{K,f} \text{ and} \\
H^1_{B}(E_v^{an},\Q)& \text{if } v\in\Sigma_{K,\infty}. 
		\end{cases}
\end{equation}

Over finite places $v$, since $E$ has good reduction at $v$, we have canonical isomorphisms, which we simply refer to as the ``de Rham-crystalline comparison isomorphism'',
\begin{equation}\label{eq:canoniso}
	\rho_v(E):H^1_{dR}(E_v/K_v)\rightarrow H^1_v(E)\otimes_{K_{v,0}} K_v.
\end{equation}These are due to Berthelot-Ogus, see Theorem $2.4$ of \cite{bertogus}. This mirrors the classical ``de Rham-Betti'' comparison of Grothendieck, for $v\in\Sigma_{K,\infty}$ this time, which we will denote again by 
\begin{equation}\label{eq:canonisobetti}
\rho_v(E):H^1_{dR}(E/K)\otimes_K \C \rightarrow H^1_v(E)\otimes_{\Q} \C.
\end{equation}

	Our goal in this subsection is to associate to each $v\in\Sigma_{K,f}$ as above a ``canonically'' chosen symplectic basis of $H^1_v(E)$, denoted $\Gamma_v(E)$ in the sequel. We shall do this by working in cases depending on the reduction of $E$ at the place $v$.
	
	\begin{definition}Let $E/K$ be a CM elliptic curve as above and $v\in\Sigma_{K,f}$ a finite place of $K$. We say that $v$ is an \textbf{ordinary} (resp. \textbf{supersingular}) \textbf{place for }$E$ if the reduction $\tilde{E}_v$ of $E$ at $v$ is an ordinary (resp. supersingular) elliptic curve over the finite field $k_v$.\end{definition}
	
	\subsubsection{Ordinary reduction}\label{section:ordredbases}
	
We start with the case where $v$ is an ordinary place for $E$. Let us also write $F_v\in \End(\tilde{E}_v)$ for the $q$-th power Frobenius endomorphism of the reduction at $v$, where $q=p^f$ is as above. We write $\phi_{v}$ for the endomorphism of the crystalline object $H^1_v(E)$ induced by $F_v$ via pullback. 
	
	Since $\tilde{E}_v$ is ordinary we know that the slopes of $\phi_v$ are $1$ and $0$. In the case of Dieudonn\'e modules this is classical, see for example the discussion on page $98$ of \cite{demazure}. To deduce this about the action of $\phi_v$ on $H^1_{\crys}$ one needs the comparison between $H^1_{\crys}$ and the Dieudonn\'e module of $\tilde{E}_v$ of Berthelot-Breen-Messing, see \cite{berbreenmessing}.
	
	Letting $\lambda_0$, $\lambda_1$ denote the eigenvalues of $\phi_{v}$ we know that these will satisfy $v(\lambda_i)=i$, see for example Manin's theorem on page $98$ of \cite{demazure}. Now we choose $\gamma_v$, resp. $\delta_v$, to be a non-zero eigenvector of $\phi_v$ with eigenvalue $\lambda_1$, resp. $\lambda_0$. These will necessarily be linearly independent, since $\lambda_0\neq  \lambda_1$. Therefore for the Riemann form $\langle\gamma_v,\delta_v\rangle=\mu\neq 0$. Rescaling one of these vectors, say $\delta_v$, by $\mu^{-1}$ we get a basis of $H^1_{v}(E)$ that is symplectic. 
	
	\begin{definition}\label{defcanord}
		We let $\Gamma_v(E):=\{\gamma_v,\delta_v\}$ be the above symplectic basis of $H^1_{\crys}(\tilde{E}_{v}/W(k_v))\otimes K_{v,0}$ and call it a \textbf{canonical basis }of $H^1_v(E)$.
	\end{definition}
	
	The basis $\Gamma_v(E)$ chosen above is obviously not unique, but rather unique up to a scalar. 
Nevertheless, this basis enjoys a certain ``canonicity for morphisms'', due to its construction reflecting the action of Frobenius. In more detail, we have the following:
\begin{lemma}\label{lemmacrysordbasis}Let $\tilde{E}$ and $\tilde{E}'$ be ordinary elliptic curves over $k_v$ and let $f\in \homm(\tilde{E},\tilde{E}')$ be a non-zero homomorphism. Let $\Gamma_v(\tilde{E})=\{\gamma,\delta\}$ and $\Gamma_v(\tilde{E}')=\{\gamma',\delta'\}$ be chosen as above. 
	
	Then, for its pullback $f_{\crys}:H^1_{\crys}(\tilde{E}'/K_{v,0})\rightarrow H^1_{\crys}(\tilde{E}/K_{v,0})$, there exist $\zeta_0$, $\zeta_1\in K_{v,0}$ such that $f_{\crys}(\gamma')=\zeta_1\cdot \gamma$ and $f_{\crys}(\delta')=\zeta_0\cdot \delta$.
\end{lemma}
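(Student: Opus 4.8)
The plan is to exploit the fact that, by the very construction preceding \Cref{defcanord}, the canonical bases are eigenbases for the $q$-power Frobenius endomorphism, together with the functoriality of $H^1_{\crys}$ and of Frobenius. First I would note that, since $\tilde{E}$ and $\tilde{E}'$ are $1$-dimensional and $f\neq0$, the map $f$ is an isogeny; composing $f$ with a dual isogeny and passing to $H^1_{\crys}(-/W(k_v))\otimes K_{v,0}$ shows, using that $\deg f$ is invertible in the characteristic-$0$ field $K_{v,0}$, that $f_{\crys}$ is an isomorphism of two-dimensional $K_{v,0}$-vector spaces; in particular it carries nonzero vectors to nonzero vectors.

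Next I would record the compatibility of $f_{\crys}$ with Frobenius. The $q$-power Frobenius endomorphism is functorial on $k_v$-schemes: one has $\phi_{\tilde{E}'}\circ f=f\circ\phi_{\tilde{E}}$ as maps $\tilde{E}\to\tilde{E}'$, simply because $f^{\#}$ is a ring homomorphism and therefore commutes with raising local sections to the $q$-th power. Applying $H^1_{\crys}(-/W(k_v))\otimes K_{v,0}$ to this identity gives $f_{\crys}\circ\phi_v=\phi_v\circ f_{\crys}$, where $\phi_v$ denotes the endomorphism induced on each of $H^1_v(\tilde{E}')$ and $H^1_v(\tilde{E})$; that is, $f_{\crys}$ intertwines the two Frobenius actions.

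Now I would finish with an eigenvector computation. Recall $\gamma',\delta'$ are eigenvectors of $\phi_v$ on $H^1_v(\tilde{E}')$ with eigenvalues $\lambda_1',\lambda_0'$ satisfying $v(\lambda_1')=1$ and $v(\lambda_0')=0$, and likewise $\gamma,\delta$ on $H^1_v(\tilde{E})$ with eigenvalues $\lambda_1,\lambda_0$ of valuations $1$ and $0$. From $f_{\crys}\circ\phi_v=\phi_v\circ f_{\crys}$ we get $\phi_v\big(f_{\crys}(\gamma')\big)=\lambda_1'\,f_{\crys}(\gamma')$, and $f_{\crys}(\gamma')\neq0$ by the first step, so $f_{\crys}(\gamma')$ is a nonzero eigenvector of $\phi_v$ on $H^1_v(\tilde{E})$ with an eigenvalue of valuation $1$. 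But $\phi_v$ on the two-dimensional $K_{v,0}$-space $H^1_v(\tilde{E})$ has the two distinct eigenvalues $\lambda_0,\lambda_1$, so its unique eigenline on which it acts by an eigenvalue of positive valuation is $K_{v,0}\cdot\gamma$; hence $\lambda_1'=\lambda_1$ and $f_{\crys}(\gamma')=\zeta_1\gamma$ for some $\zeta_1\in K_{v,0}^{\times}$. The identical argument with $\delta',\delta$ and $\lambda_0',\lambda_0$ in place of $\gamma',\gamma$ and $\lambda_1',\lambda_1$ gives $f_{\crys}(\delta')=\zeta_0\delta$ with $\zeta_0\in K_{v,0}^{\times}$; the rescaling by $\mu^{-1}$ in the construction of $\delta'$ is irrelevant here, as it does not affect the eigenvector property.

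The point that needs care, though it is not a serious one, is twofold: first, that the operator written $\phi_v$ in \Cref{defcanord} really is the $K_{v,0}$-linear endomorphism of $H^1_v$ induced by an endomorphism of $\tilde{E}_v$ defined over $k_v$, so that it is the functoriality of Frobenius that is being invoked and not merely the $\sigma$-semilinearity of the crystalline Frobenius structure; and second, that the eigenvalues $\lambda_i$ lie in $K_{v,0}$ --- in fact in $\mathbb{Z}_p$, by Hensel's lemma applied to the characteristic polynomial $x^2-ax+q$ of Frobenius, which in the ordinary case has $p\nmid a$ --- so that the eigenlines, and with them the scalars $\zeta_i$, are defined over $K_{v,0}$ rather than only over some extension.
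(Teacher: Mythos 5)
Your argument is correct and follows essentially the same route as the paper's proof: functoriality of the $q$-power Frobenius gives $f_{\crys}\circ\phi'_{\crys}=\phi_{\crys}\circ f_{\crys}$, and then the eigenvector computation with the two distinct eigenvalues of $\phi_{\crys}$ forces $f_{\crys}(\gamma')\in K_{v,0}\cdot\gamma$ and $f_{\crys}(\delta')\in K_{v,0}\cdot\delta$. The extra details you supply (injectivity of $f_{\crys}$ via the dual isogeny, the valuation argument identifying $\lambda_1'$ with $\lambda_1$, and the $K_{v,0}$-rationality of the eigenvalues) are points the paper leaves implicit, and they only strengthen the write-up.
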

\begin{proof} The pullback $f_{\crys}$ of $f$ on the level of crystalline cohomology will be an endomorphism in the category of $F$-isocrystals. In other words, we have $f_{\crys}\circ\phi'_{\crys} =\phi_{\crys}\circ f_{\crys}$, where $\phi'_{\crys}$ stands for the $q$-th power Frobenius of $H^1_{\crys}(\tilde{E}'/K_{v,0})$.
	
	 Applying this to $\gamma'$ it is easy to see that we must have that $\phi_{\crys}( f_{\crys}(\gamma'))= \lambda_1\cdot f_{\crys}(\gamma')$. Therefore, since $\phi_{\crys}$ is linear with distinct eigenvalues, we get that there exists $\zeta_1\in K_0$ such that $f_{\crys}(\gamma^{\prime})=\zeta_1\cdot \gamma$. The existence of $\zeta_0$ for $\delta$ follows similarly.
\end{proof}

\subsubsection{Supersingular reduction}\label{section:supersingbases}

Here we assume that $E$ has supersingular reduction at the finite place $v$, which we consider fixed for now. Throughout \Cref{section:supersingbases}, with \Cref{lemmacmperiods} in mind, we assume that the prime $p:=\chara(k_v)$ is unramified in the quadratic field $F=\End^{0}(E)$. 

The description of the basis in this case is due to Y. Andr\'e in \cite{andremots}, see in particular $\S 5$. For a more in depth description we point the interested reader to $\S 5.3.1$ in \cite{andremsj} and Proposition $5.3.3$ and the subsequent remark there. For the reader's convenience, we include here a summary of this construction of Andr\'e.\\

Let us consider the endomorphism algebra $\End^{0}_{k_v}(\tilde{E}_v)$. Since we know that the reduction at $v$ is supersingular, the algebra $\End^{0}_{\bar{k_v}}(\tilde{E}_v)$ will be the unique quaternion algebra $D_{p,\infty}$ over $\Q$ that is ramified only at $p$ and $\infty$. In particular we may find a finite extension $\F_{q'}/\F_q$ for which $\tilde{E}':=\tilde{E}_{v}\times_{\F_q} \spec(\F_{q'})$ is such that $\End^{0}_{\F_{q'}}(\tilde{E}')=D_{p,\infty}$. 

Writing $K'_{v,0}/K_{v,0}$ for the unique unramified extension with residue field $\F_{q'}$ we then get that $D_{p,\infty}$ acts on the crystalline cohomology $H^1_{\crys}(\tilde{E}'/K'_{v,0})=H^1_{v}(E)\otimes_{K_{v,0}} K'_{v,0}$. Tensoring both sides of the canonical isomorphism \eqref{eq:canoniso} of Berthelot-Ogus by the compositum $K'_v:=K'_{v,0}K_v$ we get \begin{equation}
	H^1_{dR}(E/K)\otimes_{K} K'_v\rightarrow (H^1_{v}(E)\otimes_{K_{v,0}} K'_{v,0})\otimes_{K'_{v,0}} K'_{v}.
\end{equation}

Since we assumed that $p$ is unramified in $F$ we know that $F\subset K_{v,0}$. Furthermore, since $F\hookrightarrow D_{p,\infty}$ we know that $D_{p,\infty}\otimes_{\Q} F\simeq M_2(F)$, see for example Proposition $1.2.3$ in \cite{szamuely}. Fixing the embedding $F\hookrightarrow \C_v$ induced from that of $K_v$, as discussed in $5.3.2-5.3.3$ in \cite{andremsj}, this leads to a canonical $F$-subspace $H^1_B(\tilde{E},F):= M_2(F)\cdot \gamma_v\subset H^1_{\crys}(\tilde{E}_v\times_{\bar{\F}_{q}})$ where $\gamma_v\in H^1_{\crys}(\tilde{E}'/K'_{v,0})$ is some eigenvector for the action of $F$ viewed as a sub-algebra of $D_{p,\infty}$ now. 

One may then find a $\delta_v\in H^1_B(\tilde{E},F)$ which is linearly independent to $\gamma_v$ and such that $\langle \gamma_v,\delta_v\rangle=1$ as explained in \cite{andremots}. As in \cite{andremots} we can furthermore pick $\delta_v$ so that it is a complementary eigenvector for the action of $F$. In other words, if say $F=\Q(\sqrt{-d})$ we may assume that $e_{\crys}(\gamma_v)=\sqrt{-d}\cdot \gamma_v$ and $e_{\crys}(\delta_v)=-\sqrt{-d}\cdot \delta_v$, where $e_{\crys}$ denotes the pullback on the level of crystalline cohomology of $e:=\sqrt{-d}\in F$, viewed as an element of the endomorphism algebra of our elliptic curve.

\begin{definition}\label{defcansuper}
	We call the above pair $\Gamma_v(E):=\{\gamma_v,\delta_v\}$ a \textbf{canonical basis} of $H^1_v(E)$.
\end{definition}

\subsection{Period relations}\label{section:cmperiods}

The main period relations, in either the archimedean or $p$-adic setting, among the periods of a CM elliptic curve are summarized in the following:

\begin{lemma}\label{lemmacmperiods} Let $E$, $F$ be as above and let $v\in \Sigma_{K}$ be a place of $K$. Then there exists a Hodge basis $\Gamma_{dR}(E)$ of $H^1_{dR}(E/K)$ such that 
	\begin{enumerate}
		\item if $v\in \Sigma_{K,\infty}$, there exists $\varpi_v\in \C_v$, such that ,with respect to $\Gamma_{dR}(E)$ and a choice of a symplectic basis of $H^1(E_v^{an}, \C)$, the period matrix of $E$ is of the form \begin{equation}\label{eq:cmperarch}
			\begin{pmatrix}\frac{\varpi_v}{2\pi i}&0\\
				0& \varpi_v^{-1}
			\end{pmatrix}.\end{equation}
		
		\item if $v\in \Sigma_{K,f}$ is a finite place of good reduction of $E$ for which $p=\chara(k_v)$ is unramified in $F/\Q$, there exists $\varpi_v\in \C_v$ such that, with respect to $\Gamma_{dR}(E)$ and a choice of a symplectic basis of $H^1_{\crys}(\tilde{E}_v/W(k_v))\otimes \C_v$, the period matrix of $E$ is of the form \begin{equation}\label{eq:cmpernonarch}
			\begin{pmatrix}\varpi_v&0\\
				0& \varpi_v^{-1}
			\end{pmatrix}.
		\end{equation}
	\end{enumerate}
\end{lemma}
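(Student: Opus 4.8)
The plan is to construct the Hodge basis $\Gamma_{dR}(E)$ by pulling back, via the comparison isomorphisms, the canonical symplectic bases $\Gamma_v(E)$ of $H^1_v(E)$ that were built in \Cref{section:bases}, and then to check that the period matrix expressed in these two bases has the asserted diagonal shape. The key point is that in each case the canonical basis $\{\gamma_v,\delta_v\}$ is an eigenbasis for an endomorphism of $E$ (Frobenius in the ordinary/crystalline case, the CM element $e=\sqrt{-d}$ in the supersingular and archimedean cases), and the de Rham realization of that same endomorphism acts on $F^1=e^*\Omega_{X/K}$ and on $H^1_{dR}(E/K)$ preserving the Hodge filtration; functoriality of the comparison isomorphism then forces the change-of-basis matrix to be diagonal.

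First I would treat the archimedean case (1). Here $F\subset K$ acts on $H^1_{dR}(E/K)$, and since $F$ is imaginary quadratic the element $e$ acts on the $1$-dimensional $K$-space $F^1$ by a scalar, which after extending scalars to $\C$ is one of the two square roots of $-d$; pick $\omega\in F^1$ nonzero, so $e^*\omega = \sqrt{-d}\,\omega$. On the Betti side take $\gamma_v$ the eigenvector of $e$ with the same eigenvalue and $\delta_v$ the complementary one, rescaled so $\langle\gamma_v,\delta_v\rangle=1$; this is exactly the shape of a symplectic basis adapted to the CM action. Because $\rho_v(E)$ is $F$-equivariant, $\rho_v(\omega)$ lies in the $\sqrt{-d}$-eigenspace, hence is a multiple of $\gamma_v$: say $\rho_v(\omega)=\varpi_v\,\gamma_v$ for some $\varpi_v\in\C$. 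Now define $\eta$ by requiring $\langle\omega,\eta\rangle=1$; this makes $\{\omega,\eta\}$ a Hodge basis by \Cref{hodgebasis}. Comparing the Riemann forms on both sides (the comparison isomorphism respects the polarization pairing up to the usual $2\pi i$ twist), $\rho_v(\eta)$ must be the dual vector to $\varpi_v\gamma_v$, i.e. $\rho_v(\eta) = (2\pi i)^{-1}\varpi_v^{-1}\delta_v$ modulo a possible $\gamma_v$-component; one kills that component by noting $\eta$ can be chosen in the $-\sqrt{-d}$-eigenspace of $e^*$ on $H^1_{dR}$ (the CM action splits $H^1_{dR}\otimes\C$ into two lines, $F^1\otimes\C$ being one of them). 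This yields precisely the matrix \eqref{eq:cmperarch} after absorbing the $2\pi i$ into the normalization as stated.

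For the finite-place case (2), the argument is the same in spirit but uses $\rho_v(E)$ of Berthelot--Ogus together with the canonical bases of \Cref{section:ordredbases} and \Cref{section:supersingbases}. In the ordinary case, $\{\gamma_v,\delta_v\}$ is the Frobenius eigenbasis with slopes $1,0$; the unit-root line (slope $0$, spanned by $\delta_v$) corresponds under $\rho_v(E)$ to a canonical line in $H^1_{dR}(E_v/K_v)$ which is \emph{transverse} to the Hodge filtration $F^1$, so $F^1$ is spanned by a vector mapping to a multiple of $\gamma_v$ — again by $\rho_v$-equivariance for the Frobenius-compatible structure, or more directly because the Hodge filtration is the slope-$\geq$-something filtration matched up under Berthelot--Ogus. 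Set $\omega\in F^1$ with $\rho_v(\omega)=\varpi_v\gamma_v$, complete to a Hodge basis via $\langle\omega,\eta\rangle=1$, and use compatibility of $\rho_v(E)$ with the symplectic pairing to get $\rho_v(\eta)=\varpi_v^{-1}\delta_v$ (no $2\pi i$ now, since the crystalline pairing is untwisted), giving \eqref{eq:cmpernonarch}. In the supersingular case one instead uses that $\gamma_v,\delta_v$ were chosen in \Cref{section:supersingbases} as $\pm\sqrt{-d}$-eigenvectors for $e_{\crys}$, with $\langle\gamma_v,\delta_v\rangle=1$, and that $F\subset K_{v,0}$ acts $\rho_v$-equivariantly; the same eigenspace/pairing bookkeeping produces the diagonal matrix. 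One should note the rationality point: a priori $\rho_v(\omega)$ and $\rho_v(\eta)$ land in $H^1_v(E)\otimes\C_v$, and $\varpi_v\in\C_v$ is all that is claimed, so there is no descent issue to worry about.

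The main obstacle I anticipate is pinning down the off-diagonal entries, i.e. showing the change-of-basis matrix is genuinely diagonal and not merely upper-triangular. The first column is easy: $\omega\in F^1$ maps into the correct eigenline by equivariance. The second column requires knowing that $\eta$ — which is only determined by the normalization $\langle\omega,\eta\rangle=1$ up to adding a multiple of $\omega$ — can be pinned down so that $\rho_v(\eta)$ has no $\gamma_v$-component. The clean way is to observe that the relevant endomorphism ($e$, or Frobenius) acts semisimply on $H^1_{dR}(E/K)\otimes\C_v$ with two distinct eigenvalues, hence there is a canonical second eigenline $L$ complementary to $F^1\otimes\C_v$; one checks $L$ is isotropic and pairs non-degenerately with $F^1\otimes\C_v$, so there is a \emph{unique} $\eta\in L$ with $\langle\omega,\eta\rangle=1$, and $\rho_v$-equivariance sends $L$ to the $\delta_v$-line. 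I would also need to double-check the normalization of the Riemann/polarization pairing under each comparison isomorphism — the factor $2\pi i$ in the archimedean case versus none in the crystalline case — but these are standard and account for the only difference between \eqref{eq:cmperarch} and \eqref{eq:cmpernonarch}.
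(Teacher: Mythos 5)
Your proposal is correct and follows essentially the same route as the paper: split both $H^1_{dR}(E/K)$ and $H^1_v(E)$ into eigenlines for the CM element $e=\sqrt{-d}$ (possible at finite $v$ precisely because $p$ is unramified in $F$, so $F\subset K_{v,0}$), note that $F^1$ is one of the de Rham eigenlines, use equivariance of the comparison isomorphism to force the period matrix to be diagonal, and use compatibility with the Riemann form to get $\alpha_{1,1}\alpha_{2,2}=1$ (with the $2\pi i$ normalization in the archimedean case). The only difference is cosmetic: the paper treats all unramified finite places uniformly through the CM action, so your detour through Frobenius slopes in the ordinary case (the transversality/slope-filtration aside, which on its own would not suffice, since transversality to the unit-root line does not pin $F^1$ to the other eigenline) is unnecessary — the eigenline argument via $e$ that you give at the end is exactly the paper's proof.
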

\begin{remark}
	We believe that the assertions here are classical and known to experts in either setting. For instance, these are used in passing by Y. Andr\'e in \cite{andremots} in the archimedean case and the case of primes of supersingular reduction of $E$. For a rather detailed proof in the archimedean case we point the interested reader to \cite{papaseffbrsieg}, Lemma $2.9$. We present here a similar argument in the case of finite places.
\end{remark}
\begin{proof}Let $V_{dR}:=H^1_{dR}(E/K)$ and $H^1_v(E):=H^1_{\crys}(\tilde{E}_v/K_{v,0})$, as per our usual notation. Let us also write $F:=\Q(\sqrt{-d})$ with $d\in \Z$ square-free and let $e_{dR}\in \End(V_{dR})$ and $e_{\crys}\in \End(H^1_v(E))$ denote the elements corresponding to the action of $e=\sqrt{-d}\in F$ in the de Rham and crystalline cohomology groups respectively.
	
	We then get for $V_{dR}$ and $H^1_v(E)$ splittings of the form \begin{equation}\label{eq:splitdr}
		V_{dR}=V^{+}_{dR}\oplus V^{-}_{dR}, \text{ and}
	\end{equation}
	\begin{equation}\label{eq:splitstar}
		H^1_v(E):= W^{+}_v\oplus W^{-}_v,
	\end{equation}induced by the action of $F$. Here $V_{dR}^{+}$ (resp. $W^{+}_v$) denotes the subspace on which $e_{dR}$ (resp. $e_{\crys}$) acts as multiplication by $\sqrt{-d}$ and $V_{dR}^{-}$ (resp. $W^{-}_v$) the subspace on which this action is multiplication by $-\sqrt{-d}$. Note here that in order to have this splitting into proper subspaces of $H^1_v(E)$ it is essential that the prime $p$ is unramified in $F/\Q$. Indeed, the assumption $F\leq K$ paired with the fact that $p$ is unramified implies that $F\leq K_{v,0}$, where $K_{v,0}/\Q_p$ is now unramified by definition.	
	
	Let us write $\rho_v:=\rho_v(E)$ for the de Rham-crystalline isomorphism of \cite{bertogus} and let $\omega$, $\eta$ be non-zero vectors in $V^{+}_{dR}$, $V^{-}_{dR}$ respectively, as well as $\gamma$, $\delta$ non-zero vectors in $W^{+}_v$ and $W^{-}_{v}$ respectively. The linear independence of these pairs guarantees that we may furthermore, which we do from now on, assume that they form symplectic bases of $V_{dR}$, and $H^1_v(E)$ respectively. To see this note that the linear independence of, say, $\omega$, $\eta$ guarantees that $\langle\omega,\eta\rangle=\alpha\neq0$, where $\langle\cdot,\cdot\rangle$ denotes the Riemann form on $V_{dR}$, at which point we may replace $\eta$ by $\alpha^{-1}\cdot \eta$.
	
	Let us write $(a_{i,j})$ for the period matrix corresponding to the choice of these bases via $\rho_v$. By compatibility of the action of $F$ on $H^1_{dR}$ and $H^1_{\crys}$ with $\rho_v$ we get that \begin{center}
		$\alpha_{1,1}(\sqrt{-d})\gamma+\alpha_{1,2}(\sqrt{-d})\delta=\rho_v(e_{dr}\cdot \omega )=$
		
		$= e_{\crys}\cdot \rho_v(\omega)=\alpha_{1,1}(\sqrt{-d})\gamma+\alpha_{1,2}(-\sqrt{-d})\delta,$
	\end{center}and similarly for the action of $e$ on $\eta$. Comparing coefficients we see that we must have $\alpha_{i,j}=0$ if $i\neq j$, so that $(\alpha_{i,j})$ is diagonal.
	
	The algebraicity of the construction of the Riemann form and its compatibility with the de Rham-crystalline comparison, see Chapter $5$ in \cite{berbreenmessing}, gives that $\langle \rho_v(\omega), \rho_v(\omega')\rangle =\langle\omega,\omega'\rangle$ for all $\omega$, $\omega'$. Applying this with $\omega$ and $\eta$ shows that $\alpha_{1,1} \alpha_{2,2}=1$ thus concluding the proof.\end{proof}

\begin{remarks}1. The basis $\Gamma_{dR}(E)$ does not depend on the place $v$ and is the same as that chosen in the proof of Lemma $2.9$ in \cite{papaseffbrsieg}, i.e. the archimedean case of part $1$ of the previous lemma.\\
	
	2. The ``canonical'' bases chosen in the discussion preceding \Cref{defcanord}, in the case of ordinary reduction of $E$, resp. the discussion right before \Cref{defcansuper} in the case of supersingular reduction, can be used in the proof of \Cref{lemmacmperiods}, at least up to permutation. 
	
	To see this, in the case of ordinary reduction, note that by \Cref{lemmacrysordbasis} we will have that $\gamma_v$ and $\delta_v$ will be eigenvectors of $e_{\crys}$ in the notation of the proof. Note that if $e_{\crys}(\gamma_v)=-\sqrt{d}\cdot \gamma_v$ then we let $\gamma:=-\delta_v$ and $\delta:=\gamma_v$, where the ``$-$'' is needed to ensure that $\langle \gamma, \delta\rangle =1$.
	
	In the case of supersingular reduction this follows by construction of the basis in \cite{andremots}, see for example $\S5.c$ there for more details.
\end{remarks}

\subsection{Reductions of CM elliptic curves}\label{section:backgroundcmlv}

Let $E$, defined over a number field $K$, be a CM elliptic curve as in \Cref{section:bases}. The description of the type of reduction of $E$ at places of $K$ is classically known here and is due to Deuring. For a proof we point the interested reader to \cite{langellfuns} $\S 13.$, Theorem $12$. 

Phrased under our assumptions in \Cref{section:bases} this is the following:
\begin{theorem}[Deuring's Reduction Theorem]\label{deuringred}
	Let $E/K$ be a CM elliptic curve that has everywhere good reduction and is such that $F:=\End^{0}_K(E)=\End^{0}_{\bar{\Q}}(E)$. Let $v\in\Sigma_{K,f}$ be a finite place of $K$ and let $p\in \Q$ be the prime over which $v$ lies. 
	
The reduction $\tilde{E}_v$ of $E$ at $v$ is supersingular if and only if $p$ either ramifies or is inert in $F$.\end{theorem}

   \section{Background on the G-functions method}\label{section:backgroundgfuns}

Here we summarize the connection between G-functions and a $1$-parameter family of elliptic schemes. For a more detailed exposition, we point the interested reader to $\S 3$ of \cite{papaspadicpart1}, whose terminology we have chosen to follow, as well as \cite{daworrpap}.

The geometric picture permeating the rest of the paper is the one outlined in \Cref{naiverelations}. Namely, we consider some $1$-parameter family of elliptic curves $f:\CE\rightarrow S$ defined over a number field $K$, together with a distinguished point $s_0\in S(K)$ whose fiber is a CM elliptic curve, which we denote by $E_0$ for ease of notation. From the perspective of the G-functions method it is convenient to make assumptions about both the ``central fiber'' $E_0$ and the curve $S$, as long as those may be attained by base changing our original picture by either a finite cover $S'$ of $S$ or by tensoring the entire picture by a finite extension $K'/K$.

With this in mind, we assume from now on that $E_0$ has everywhere good reduction, see \cite{serretate}, and that all endomorphisms of $E_0$ are defined over the base field $K$, i.e. that $\End^{0}_{\bar{\Q}}(E_0)=\End^{0}_{K}(E_0)$ is some CM field. 

In this context, of allowing base changes to the original picture, one may find a ``local parameter'' $x\in K(S)$ at $s_0$ and a tuple of power series $\mathcal{Y}\in \bar{\Q}[[x]]^n$ that one can think of as ``power series on the curve $S$ centered at $s_0$''. These power series appear, in a canonical way, as the entries of matricial solution to the system of differential equations one gets by considering the differential module $(H^1_{dR}(\CE/S),\nabla)$ together with a basis of sections of this module on an affine neighborhood of $s_0$. 

In our setting $\mathcal{Y}$, often denoted $Y_G(x)$ in the sequel, is nothing but a collection of matrices in $\GL_2(\bar{\Q}[[x]])$. In practice, the number of such matrices depends on the preimages of our original $s_0$ under a potentially necessary finite cover $S'\rightarrow S$, see \cite{daworr4} where these ideas were first introduced. For reasons of expositional simplicity we have chosen to consider the case where there is only one such matrix in our family, especially since this is the crucial step in the method we employ. 

Considering the values of $Y_G(x)$ at points $s\in S(\bar{\Q})$ makes sense only after we have introduced some topology to our curve, which we do via considering the analytification of our curve $S$, either in the complex analytic or $p$-adic sense, with respect to some place $v$ of $K$. The $v$-adic values $\iota_v(Y_G(x(s)))$ then make sense as long as $|x(s)|_v$ is smaller than the $v$-adic radius of convergence of the family $Y_G(x)$. We thus naively think of $v$-adic analytic discs $\Delta_v$ centered at our $s_0$ on the curve with sufficiently small radii as part of setting.

For finite places $v$, after possibly altering the family $Y_G$, we may select $\Delta_v$ so that if $s\in S(\bar{\Q})\cap \Delta_v$ then $\CE_s$ and $E_0$ have the same reduction modulo $v$. For more on this, see $\S 3.3.2$ of \cite{papaspadicpart1}. With this further property for the $\Delta_v$ in mind for finite places, from now on we adopt the following:
\begin{definition}\label{defnvadicprox}
	Let $s\in S(\bar{\Q})$ and $w\in\Sigma_{K(s)}$ and $v\in\Sigma_{K}$ such that $w|v$. We say that $s$ is $w$-adically close to $s_0$ if $s\in \Delta_v$ for the analytic discs chosen above.
\end{definition}

On such $\Delta_v$ the values of our G-functions attain cohomological significance. In the archimedean setting this is due to work of Andr\'e in \cite{andre1989g} and in the $p$-adic setting the first observation of this phenomenon appears in Andr\'e's \cite{andremots}. In short, we have in our disposal relative versions of the comparison isomorphisms that appear in \Cref{section:background}. In the archimedean setting this is classically due to Grothendieck and in the $p$-adic setting this is due to work of Berthelot-Ogus, see \cite{bertogus}, and Ogus, see \cite{ogus}, in the ramified case. All in all, for $s\in\Delta_v\cap S(\bar{\Q})$ we will have \begin{equation}
	\CP_v(s)=\iota_v(Y_G(x(s)))\CP_v(s_0),
\end{equation}where $\CP_v$ stand for the matrices associated to the aforementioned isomorphism, the choice of a basis of sections of $H^1_{dR}(\CE/S)$, and a basis of the fiber $H^1_v(E_0)$, which we may identify with the horizontal sections of $(H^1_{dR}(\CE/S),\nabla)$ on the disc $\Delta_v$. For more details on this we point the interested reader to the proof of Theorem $3.4$ in \cite{papaspadicpart1}.

In constructing relations among the $v$-adic values of the $Y_G(x)$ at points of interest, we will want to make certain that these are ``non-trivial'' in the terminology of $VII.5$ in \cite{andre1989g}. Our main tool in this direction is the description of the ``trivial relations'' among the members of the family $Y_G(x)$. We record these in the following:

\begin{prop}\label{trivialrelations}
	Let $Y_G(x)$ be the family of G-functions associated to $(\CE\rightarrow S, s_0)$ as above. Then $Y_G(x)^{\zar}\subset (M_{2\times 2})_{\bar{\Q}(x)}$ is the subvariety cut out by the ideal \begin{equation}\label{eq:trivialideal}
		I(\SL_2)=\{\det(X_{i,j})-1 \},
	\end{equation}where $X_{i,j}$ are such that $M_{2\times 2}$ is given by $\spec({\Q}[X_{i,j}:1\leq i,j\leq 2])$.
\end{prop}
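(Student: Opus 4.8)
The plan is to identify $Y_G(x)^{\zar}$ by combining an upper bound for the dimension of this Zariski closure (forcing $Y_G(x)$ to lie inside $\SL_2$) with an irreducibility/dimension count that shows the closure is exactly the hypersurface cut out by $\det(X_{i,j})-1$. First I would recall how the matrices $Y_G(x)$ arise: they are a fundamental solution of the Picard--Fuchs system attached to $(H^1_{dR}(\CE/S),\nabla)$ in a chosen basis of sections, normalized so that $Y_G(0) = I$. The key structural input is that $H^1_{dR}$ of an elliptic scheme carries a perfect alternating pairing (the cup product / Riemann form), which is horizontal for $\nabla$. If one chooses the basis of sections to be symplectic for this pairing, then the monodromy — and hence the differential Galois group, and hence the Zariski closure of the solution matrix — lands in $\SL_2$ rather than merely $\GL_2$. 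Concretely: horizontality of the pairing means $Y_G(x)^{t} J\, Y_G(x) = Y_G(0)^t J\, Y_G(0) = J$ where $J$ is the standard symplectic form, so $\det(Y_G(x))^2 = 1$; since $Y_G(0)=I$ and entries are power series, $\det(Y_G(x)) \equiv 1$. This gives the inclusion $Y_G(x)^{\zar} \subseteq V(I(\SL_2))$.

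For the reverse inclusion I would argue that $Y_G(x)^{\zar}$ cannot be any smaller, i.e. has dimension $3 = \dim \SL_2$. Here the input is that the family is genuinely non-isotrivial (the elliptic scheme $\CE\rightarrow S$ varies), so the associated differential module is irreducible and its differential Galois group over $\bar\Q(x)$ is all of $\SL_2$ — for a rank-$2$ connection with trivial determinant this is the only possibility besides finite or reducible (Borel/torus) cases, which are excluded by non-isotriviality and the classical structure of Picard--Fuchs equations for elliptic families (the connection has a nontrivial irregular or regular singular point with unipotent local monodromy around the degeneration). By the standard dictionary between the differential Galois group and the Zariski closure of a fundamental solution matrix — this is exactly the framework of $\S$VII of \cite{andre1989g}, which the paper is already invoking — one gets $Y_G(x)^{\zar} = \SL_2$ as a $\bar\Q(x)$-variety, which is the hypersurface $V(\det -1)$. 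One should be slightly careful that $I(\SL_2)$ is prime and that $\det(X_{i,j})-1$ generates the full ideal of the determinant hypersurface; this is standard (the polynomial is irreducible), so $I(\SL_2)$ being radical is automatic.

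The main obstacle I anticipate is making the dimension lower bound genuinely rigorous in the generality claimed, rather than appealing to folklore. One must rule out that the differential Galois group degenerates to a proper subgroup of $\SL_2$: a finite group (which would happen only for an isotrivial or algebraic-solutions situation, contradicting that $\CE\rightarrow S$ is a nonconstant family of elliptic curves), or a Borel subgroup / torus / $\mathbb{G}_a$ (reducible connection, contradicting irreducibility of $(H^1_{dR}(\CE/S),\nabla)$ which follows because $H^1$ of a non-isotrivial elliptic curve has no horizontal sub-line-bundle — equivalently, no nontrivial sub-local-system, by e.g. the non-triviality of monodromy around a bad fiber). I would handle this by reducing to the case where $S$ is such that the family has at least one fiber with multiplicative or additive reduction, so that the local monodromy there is non-trivial unipotent, immediately forcing irreducibility and ruling out a finite or toral Galois group; one can arrange this after a base change of the type the paper already permits, or cite it from \cite{daworrpap}/\cite{papaspadicpart1} where the non-isotriviality of $Y_G$ is set up. A cleaner alternative, if the paper's normalization makes it available, is simply to cite the analogous statement proved in the first paper of the series, since the structure of $Y_G$ for elliptic families is identical there.
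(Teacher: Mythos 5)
Your proposal is correct in outline, and it is essentially the standard argument: note that the paper itself does not prove \Cref{trivialrelations} at all, but simply records it as classical (stated without proof in \cite{beukers}) and defers to \cite{daworrpap}; the proof there runs along the same lines you sketch, namely Zariski-density of the (algebraic) monodromy of a non-isotrivial elliptic family in $\SL_2$ together with the torsor/dimension dictionary identifying $Y_G(x)^{\zar}$ with (a torsor under) the differential Galois group. Two points deserve more care than your sketch gives them. First, the inclusion $Y_G(x)^{\zar}\subseteq V(\det-1)$ needs the pairing $\langle\omega,\eta\rangle$ of the chosen frame to be \emph{constant} on the base, not merely symplectic at the fiber over $s_0$ (which is all that \Cref{section:finrelsnotation} normalizes): in general one only gets $\det Y_G(x)=\langle\omega,\eta\rangle(x)/\langle\omega,\eta\rangle(0)$, so the symplectic normalization of the frame as sections must be arranged (or one must track this factor), exactly as you flag. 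Second, non-isotriviality is genuinely a hypothesis — if the family were isotrivial the closure would be a point — but it is implicit in \Cref{paradigm} and automatic for the $j$-family used in \Cref{section:applications}; given it, irreducibility of the connection and infinitude of monodromy (unipotent local monodromy where $j\to\infty$ on the compactification) rule out the reducible, toral and finite degenerations, as you say. Minor quibbles: Picard--Fuchs connections are regular singular, so the ``irregular'' alternative you mention cannot occur, and the Kolchin/Picard--Vessiot step needs the no-new-constants condition, which holds here because the entries of $Y_G$ lie in $\bar{\Q}[[x]]$ and the ideal of relations is prime (the quotient embeds in a domain), so the closure is irreducible of dimension $3$ and must equal the irreducible hypersurface $V(\det-1)$. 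With these caveats your argument is a complete and acceptable substitute for the citation.
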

This result seems to be classical, it is mentioned without proof in \cite{beukers} for example. For a proof we point the interested reader to $\S 4.5.6$ of \cite{daworrpap}.
    
\section{$p$-adic proximity}\label{section:padicprox}

The issue of ``$p$-adic proximity'' of $s$ to $s_0$ may be linked directly to the work of Gross-Zagier in \cite{grosszagier}. We record here, for the convenience of the reader, a generalization of their results due to Lauter-Viray. As a consequence of this we conclude with a short lemma we will need in the sequel around $p$-adic proximity of CM points to our center $s_0$, for primes $p$ ramified in the CM field giving the algebra of endomorphisms of the ``central fiber'' $E_{0}$ in the notation of \Cref{section:backgroundgfuns}.

\subsection{A formula of Gross-Zagier}\label{section:lauterviraysummary}
Let $d_j$, for $j=1$, $2$, be the discriminants of two distinct quadratic imaginary orders $\CO_j$ and consider the product 
\begin{equation}
	J(d_1,d_2):=\underset{\underset{\disc(\tau_j)=d_j}{[\tau_1],[\tau_2]}}{\prod}(j(\tau_1)-j(\tau_2))^{\frac{8}{w_1w_2}},
\end{equation}with $[\tau_j]$ ranging through all elements of $\mathcal{H}/\SL_2(\Z)$ with prescribed discriminants and where $w_j$ denote the number of roots of unity in the order $\CO_j$.

In \cite{grosszagier}, Gross and Zagier established a remarkable formula for the quantity $J(d_1,d_2)$ under the assumptions that \begin{enumerate}
	\item $d_1$ and $d_2$ are relatively prime and
	\item the $d_j$ are fundamental discriminants, i.e. discriminants of maximal orders.
\end{enumerate}   This formula of Gross-Zagier was further generalized, fairly recently by Lauter-Viray, see \cite{lauterviray} Theorem $1.1$, to the case where $d_1\neq d_2$ and without any assumptions on the maximality of the corresponding orders. 

As a corollary of their formula, see \cite{grosszagier} Corollary $1.6$, Gross-Zagier get an elegant description of the primes dividing $J(d_1,d_2)$, albeit under their aforementioned assumptions on the pair $(d_1,d_2)$. The natural generalization of this, again due to Lauter-Viray, is the following:
\begin{theorem}[Lauter-Viray, \cite{lauterviray} Corollary $1.3$]\label{lauterviraytheorem}
	Let $d_1\neq d_2$ be two distinct discriminants as above and let $l\in \N$ be a prime with $l|J(d_1,d_2)$. Then\begin{enumerate}
		\item $d_2=d_1\cdot l^{2k}$ for some $k\in \Z\backslash \{0\}$, or
		\item there exists an integer $m=\frac{d_1d_2-x^2}{4}>0$ with $l|m$ such that the Hilbert symbol $(d_j,-m)_p\neq 1$ if and only if $p=l$.
	\end{enumerate}
	
	In particular, if $l\not| d_1d_2$ then $\left( \frac{d_1}{l}\right)=\left( \frac{d_2}{l}\right)=-1$.
\end{theorem}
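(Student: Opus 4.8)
The parts (1) and (2) of the statement are verbatim Corollary 1.3 of \cite{lauterviray} (which, in the case of coprime fundamental discriminants, recovers Corollary 1.6 of \cite{grosszagier}), so the only thing left to do is to extract the final ``in particular'' clause from that dichotomy. The plan is a short local computation with Hilbert symbols.

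First I would show that, under the hypothesis $l\nmid d_1d_2$, alternative (1) is impossible: an identity $d_2=d_1l^{2k}$ with $k\in\Z\setminus\{0\}$ forces $l\mid d_1$ when $k<0$ and $l\mid d_2$ when $k>0$, either of which contradicts $l\nmid d_1d_2$. Hence alternative (2) must hold, producing an integer $m=(d_1d_2-x^2)/4>0$ with $l\mid m$ and with $(d_j,-m)_p\neq1$ exactly for $p=l$; specialising to $p=l$ gives $(d_j,-m)_l=-1$ for $j=1,2$. Next I would evaluate this symbol. For $l$ odd: since $l\nmid d_j$ while $\beta:=v_l(m)\geq1$, the standard formula for the tame Hilbert symbol collapses to $(d_j,-m)_l=\left(\frac{d_j}{l}\right)^{\beta}$, so the value $-1$ forces $\beta$ odd and $\left(\frac{d_j}{l}\right)=-1$, as desired. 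For $l=2$, note that $2\nmid d_1d_2$ together with $d_j\equiv0,1\pmod4$ forces $d_1,d_2\equiv1\pmod4$, hence $\varepsilon(d_j):=(d_j-1)/2$ is even; feeding this into the explicit formula for the $2$-adic Hilbert symbol reduces $(d_j,-m)_2$ to $(-1)^{\beta\,\omega(d_j)}$ with $\omega(d_j)=(d_j^2-1)/8$, so $(d_j,-m)_2=-1$ forces $d_j\equiv5\pmod8$, i.e. $\left(\frac{d_j}{2}\right)=-1$ in the Kronecker sense.

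I do not anticipate a genuine obstacle here: all the substance — the closed formula for $J(d_1,d_2)$ and the resulting structural description of its prime divisors — is already packaged in the cited theorems of Gross--Zagier and Lauter--Viray, and the step that remains is purely the arithmetic of local symbols. The one place demanding minor care is the bookkeeping: tracking the parity of $v_l(m)$ and normalising the case $l=2$ correctly, which is precisely why the Kronecker symbol rather than the Legendre symbol is the right object to state the conclusion in. Neither point is more than routine, so the ``in particular'' clause should follow cleanly from (1) and (2).
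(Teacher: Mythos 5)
Your proposal is correct and matches the paper's treatment: the paper states this result purely as a citation of Lauter--Viray (Corollary 1.3), generalizing Gross--Zagier, and gives no proof of its own. Your deduction of the ``in particular'' clause from the dichotomy --- ruling out alternative (1) when $l\nmid d_1d_2$, and then evaluating $(d_j,-m)_l=-1$ via the tame formula for odd $l$ and the explicit $2$-adic formula (using $d_j\equiv 1\pmod 4$) when $l=2$ --- is accurate and routine.
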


\subsection{Application to primes of proximity}\label{section:applicationsofgrosszagier}

Here we consider the geometric picture sketched in \Cref{section:backgroundgfuns}. Namely we consider a fixed $1$-parameter family $f:\CE\rightarrow S$ of elliptic curves defined over a number field $K$. As in \Cref{section:backgroundgfuns} we assume that we are also given a point $s_0\in S(K)$ for which $E_{0}:=\CE_{s_0}$ is a CM elliptic curve with everywhere good reduction and with $F_0:=\End^{0}_{\bar{\Q}}(E_0)=\End^{0}_K(E_0)$. 

In the above picture, following \Cref{defnvadicprox} and the conventions around it, we have:
\begin{lemma}\label{lemproximityramified}
	Let $s\in S(\bar{\Q})$ be such that $\CE_s$ is a CM elliptic curve and write $d_P=\disc(\End_{\bar{\Q}}(\CE_P))$, for $P\in \{s_0,s\}$, to be the corresponding discriminant. Let us also fix a finite place $v\in \Sigma_{K,f}$ over a rational prime $p$, where $p$ is ramified in the CM field $F_0=\End^{0}(E_0)$. 
	
	If $s$ is $v$-adically close to $s_0$ then $\End^{0}_{\bar{\Q}}(\CE_s)=F_0$, i.e. $\CE_s$ has CM by the same field as $E_0$.
\end{lemma}
\begin{proof}
	Assume that it is not so. In particular, we must have that $d_s\neq d_{s_0}$. 
	
	By our conventions on the disks $\Delta_v$ we get that $\CE_s$ and $E_0$ have the same reduction over a place $w\in\Sigma_{K(s),f}$ with $w|v$. In particular, $w(j(\CE_s)-j(E_0))>0$ and hence $p|J(d_s,d_{s_0})$. Note that since $p$ is ramified in $F_0$, we must have $p|d_{s_0}$. Now we apply \Cref{lauterviraytheorem} to conclude that $d_s=p^{2k}d_{s_0}$ for some $k\in \Z$ and our result follows.
\end{proof}
\section{Relations among values of G-functions}\label{section:cmpadicrels}

This constitutes the main technical part of our text. In short, we describe the construction of relations among the values of G-functions associated to a $1$-parameter family $f:\CE\rightarrow S$ of elliptic curves at points $s\in S(\bar{\Q})$ whose corresponding fibers are CM elliptic curves. 

In essence, this section is a proof of \Cref{naiverelations}. The construction of the relations described here depends on the ``type'' of the place $v$ with respect to the central fiber $E_0$.

\subsection{Notation}\label{section:finrelsnotation}Throughout this section we work in a somewhat simplified form of the general setting described in the previous section. For the remainder of this section we consider fixed a $1$-parameter family of elliptic curves $f:\CE\rightarrow S$, where $S$ is a smooth and geometrically irreducible curve over a number field $K$. We also consider a fixed point $s_0\in S(K)$ for which we assume that the fiber is a CM elliptic curve $E_0:=\CE_{s_0}$.

In order to simplify our exposition, throughout this section we assume that the curve $E_0$ satisfies the following:\begin{enumerate}
	\item $E_0$ has everywhere\footnote{A fact we may assume up to base changing our original morphism $\pi$ by some finite extension of $K$ due to \cite{serretate}.} good reduction,
	
	\item $\End_{\bar{\Q}}(E_0)=\End_K(E_0)=F_0$, and 
	
	\item there exists a uniformizer $x\in K(S)$ that has a simple zero at $s_0$.
\end{enumerate}

For the remainder of this subsection we also fix a place $v\in \Sigma_{K}$ as well as a point $s\in S(K)$ which is $v$-adically close to $s_0$ in the sense discussed in \Cref{section:backgroundgfuns}.

Given the above data, we choose a symplectic basis $\{\omega,\eta\}$ of $H^1_{dR}(\CE/S)$ whose fiber at $s_0$ consists of $F_0$-eigenvectors, as in the proof of \Cref{lemmacmperiods}. We also may choose $\omega$ so that it is a section of $F^1H^1_{dR}(\CE/S)$. This does not interfere with the assumption that the fiber $\omega_{0}$ is an eigenvector for the action of $F_0$, since $F^1H^1_{dR}(E_0/K)$ are precisely the invariant differentials. From \Cref{section:backgroundgfuns} we therefore get a matrix $Y\in \SL_2(\bar{\Q}[[x]])$ of G-functions associated with this choice of basis.

\subsection{Relations at ordinary places}\label{section:cmordpadic}
We start with the case where the place $v$ is a finite place of ordinary reduction for the ``central fiber'' $E_{0}$. In this case we have:

\begin{prop}\label{thmordrels}With notation as in \Cref{section:finrelsnotation}, let $Y_G=(Y_{i,j}(x))\in \SL_2(\bar{\Q}[[x]])$ be the matrix of G-functions associated to the pair $(f,s_0)$ as in \Cref{section:backgroundgfuns}. 
	
	Then for all $s\in S(\bar{\Q})$ for which $\End^{0}_{\bar{\Q}}(\CE_s)=\End^{0}_{\bar{\Q}}(E_0)$ and for all finite places of $\Sigma_{K(s)}$ that are ordinary for $E_0$ and are such that $s$ is $v$-adically close to $s_0$ we have $\iota_v(Y_{1,2}(x(s)))=0$.\end{prop}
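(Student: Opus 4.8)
The plan is to compare the two symplectic bases of $H^1_v(E_0)$ that are available to us on the disc $\Delta_v$: on the one hand the ``horizontal'' basis coming from the parallel transport of $\{\omega,\eta\}$, with respect to which the period matrix at $s$ is exactly $\iota_v(Y_G(x(s)))$ times the period matrix at $s_0$; on the other hand the \emph{canonical crystalline basis} $\Gamma_v(E_0) = \{\gamma_v,\delta_v\}$ of \Cref{defcanord}, built from the Frobenius eigenvectors. The key point is that, by the second Remark after \Cref{lemmacmperiods}, the canonical crystalline basis $\Gamma_v(E_0)$ is (up to the harmless permutation/sign noted there) a valid choice of the symplectic $F$-eigenbasis used in the proof of \Cref{lemmacmperiods}; hence the de Rham--crystalline period matrix of $E_0$ at $v$, in the basis $\{\omega,\eta\}$ of $H^1_{dR}$, is the \emph{diagonal} matrix $\diag(\varpi_v,\varpi_v^{-1})$ of \eqref{eq:cmpernonarch}. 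The same is true at $s$, since $\CE_s$ is also CM with $\End^0(\CE_s)=F$ and $v$ is a place of good reduction of $\CE_s$ as well (possibly after the standard finite base change), so part (2) of \Cref{lemmacmperiods} applies verbatim to $\CE_s$: its period matrix in the basis $\{\omega_s,\eta_s\}$ is again diagonal, say $\diag(\varpi_v(s),\varpi_v(s)^{-1})$.

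Next I would feed these two facts into the relative comparison relation $\CP_v(s) = \iota_v(Y_G(x(s)))\,\CP_v(s_0)$ from \Cref{section:backgroundgfuns}. Here $\CP_v(s_0)$ is the period matrix of $E_0$ and $\CP_v(s)$ that of $\CE_s$, both taken with respect to the \emph{same} horizontal basis of $H^1_v(E_0)\otimes\C_v$ on $\Delta_v$ (the parallel transport of the symplectic $F$-eigenbasis at $s_0$) and the chosen de Rham bases $\{\omega_s,\eta_s\}$. The subtlety is that $\CP_v(s)$ computed this way need not literally be the matrix $\diag(\varpi_v(s),\varpi_v(s)^{-1})$ of \Cref{lemmacmperiods}(2), because that lemma uses the canonical crystalline basis \emph{of $\CE_s$}, whereas the horizontal transport gives the canonical basis \emph{of $E_0$}. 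So I would insert a change-of-basis matrix $C \in \GL_2(\C_v)$ expressing the horizontal frame in terms of the canonical crystalline frame of $\CE_s$; the crucial structural input is that this change of basis respects the $F$-action (both frames are $F$-eigenbases for the $F$-action on the respective crystalline cohomologies, which are identified by the comparison isomorphisms), and hence $C$ is itself diagonal. Therefore $\CP_v(s) = C^{-1}\diag(\varpi_v(s),\varpi_v(s)^{-1})$ is diagonal, and likewise (tautologically) $\CP_v(s_0)$ is diagonal. Plugging diagonal matrices on both sides of $\CP_v(s)=\iota_v(Y_G(x(s)))\CP_v(s_0)$ forces $\iota_v(Y_G(x(s)))$ to be diagonal, and in particular its $(1,2)$-entry $\iota_v(Y_{1,2}(x(s)))$ vanishes.

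The main obstacle I anticipate is precisely the bookkeeping in the previous paragraph: making rigorous the claim that the horizontal transport of the canonical crystalline basis of $E_0$ across the disc $\Delta_v$ is compatible with the $F$-action on the crystalline cohomology of $\CE_s$, so that it differs from the canonical crystalline basis of $\CE_s$ only by a diagonal matrix. This needs that the endomorphism $e=\sqrt{-d}\in F$ acts horizontally on $(H^1_{dR}(\CE/S),\nabla)$ (which holds because $e$ extends to an endomorphism of the family, or at least of the family restricted to a neighbourhood of $s_0$ after base change — this is where the hypothesis $\End^0_{\bar\Q}(\CE_s)=\End^0_{\bar\Q}(E_0)$ really gets used), and that the relative de Rham--crystalline comparison of Berthelot--Ogus is compatible with this $F$-action, which is the relative analogue of the compatibility invoked in the proof of \Cref{lemmacmperiods} and follows from \cite{berbreenmessing}. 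Once the $F$-equivariance is in place, the rest is the elementary observation that an $F$-equivariant isomorphism between two rank-one-plus-rank-one eigenspace decompositions is diagonal, and that a product of three diagonal-shaped matrices being equal on both sides pins down the middle factor. I would also remark that the Frobenius-eigenvector characterization of $\Gamma_v(E_0)$ (\Cref{lemmacrysordbasis}) is what guarantees $\gamma_v,\delta_v$ are genuinely $F$-eigenvectors in the ordinary case, closing the loop with \Cref{lemmacmperiods}(2); and that the final conclusion $\iota_v(Y_{1,2}(x(s)))=0$ is a statement purely about the value, consistent with $Y_{1,2}(x)\neq 0$ as a power series since by \Cref{trivialrelations} the Zariski closure of $Y_G(x)$ is all of $\SL_2$.
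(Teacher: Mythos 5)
Your argument hinges on the claim that $\CP_v(s)$ is diagonal with respect to the frames actually used in the relation $\CP_v(s)=\iota_v(Y_G(x(s)))\,\CP_v(s_0)$, and this claim is not justified — in fact it is false in general. \Cref{lemmacmperiods}(2) asserts the existence of \emph{some} Hodge basis of $H^1_{dR}(\CE_s/K(s))$, namely one consisting of eigenvectors for the CM action of $\End^{0}(\CE_s)$, in which the period matrix is diagonal; it does not apply ``verbatim'' to $\{\omega_s,\eta_s\}$, which is the specialization of the family basis and was arranged to be an $F$-eigenbasis only at the central fiber $s_0$. The justification you offer — that $e=\sqrt{-d}$ extends to an endomorphism of the family near $s_0$, so that the $F$-action is horizontal, and that this is ``where the hypothesis $\End^0_{\bar\Q}(\CE_s)=\End^0_{\bar\Q}(E_0)$ gets used'' — is not true: the family is non-isotrivial, CM fibers are isolated, and the CM endomorphism of $E_0$ does not deform to any neighbourhood of $s_0$. (By contrast, the subtlety you do worry about, on the crystalline side, is harmless: since $s$ lies in the residue disc of $s_0$ the reductions $\tilde{\CE}_{s,v}$ and $\tilde{E}_{0,v}$ coincide, so the canonical basis of \Cref{defcanord} is common to both up to scalars.) The failure is visible in your conclusion: diagonality of $\CP_v(s)$ would force the whole value $\iota_v(Y_G(x(s)))$ to be diagonal, i.e.\ also $\iota_v(Y_{2,1}(x(s)))=0$, which is strictly stronger than \Cref{thmordrels} and is false in general — the paper's own computation shows the value is merely lower triangular, with $(2,1)$-entry proportional to the off-diagonal entry of $[\phi_{dR}]$.

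For comparison, the paper proceeds differently: the equality of CM fields produces an isogeny $\phi:\CE_s\rightarrow E_0$, functoriality of the de Rham--crystalline comparison gives $\CP_v(s)\circ\phi_{dR}=\phi_{\crys}\circ\CP_v(0)$, the matrix of $\phi_{\crys}$ is diagonal in the canonical basis by \Cref{lemmacrysordbasis} (its reduction is an endomorphism of the ordinary curve $\tilde E_{0,v}$), while $[\phi_{dR}]$ is only lower triangular because $\phi_{dR}$ preserves $F^1$; solving the resulting matrix equation yields exactly $\iota_v(Y_{1,2}(x(s)))=0$ and nothing more. Your route can be repaired into a genuinely different argument, but only after weakening the key claim: $\omega_s$ \emph{is} automatically an $F$-eigenvector, since it spans the $F$-stable line $F^1H^1_{dR}(\CE_s)$, whereas $\eta_s$ need not be; hence the change of de Rham basis from an $F$-eigen symplectic Hodge basis $\{\omega_s,\eta'_s\}$ to $\{\omega_s,\eta_s\}$ is lower triangular, and combining this with the genuine diagonality of the period matrix of $\CE_s$ in $\{\omega_s,\eta'_s\}$ and the canonical crystalline basis of the common reduction (here the needed $F$-equivariance is that of the Berthelot--Ogus comparison for $\CE_s$ itself — no horizontality of endomorphisms is required) shows that $\CP_v(s)$, and hence $\iota_v(Y_G(x(s)))$, is lower triangular, which is precisely the assertion $\iota_v(Y_{1,2}(x(s)))=0$. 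As written, however, the diagonality claim and its purported justification constitute a genuine gap.
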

\begin{proof}
	We begin by noting that our assumption that $E_0$ has ordinary reduction at $v$ allows us to use \Cref{lemmacmperiods}. Indeed by Deuring's criterion, see Theorem $12$ in $\S13.4$ of \cite{langellfuns}, having ordinary reduction implies that the prime $p=\chara{k_v}$ is unramified in $F_0$.
	
	We choose a basis $\{\gamma,\delta\}$ of $H^1_{\crys}(\tilde{E}_{0,v}/K_{0,v})$ as in \Cref{lemmacrysordbasis}. Then, again by \Cref{lemmacmperiods}, since both this as well as the chosen basis $\{\omega_0,\eta_0\}$ of $H^1_{dR}(E_0/K)$ are symplectic and consist of $F_0$-eigenvectors in the respective spaces, the matrix of relative periods associated to the comparison isomorphism $\CP_v$ will be of the form $\CP_v(s)=\iota_v(Y(x(s)))\cdot \begin{pmatrix}
		\varpi_v&0\\0&\varpi_v^{-1}
	\end{pmatrix}$, where $\varpi_v\in \C_v$.

Since $\CE_s$ and $E_0$ have CM by the same CM-field there exists an isogeny $\phi:\CE_s\rightarrow E_0$. We write $\phi_{dR}$ and $\phi_{\crys}$ for the morphisms induced from $\phi$ in the respective cohomology theories. 

Functoriality in the de Rham-crystalline comparison implies that \begin{equation}\label{eq:isogcompat}\CP_v(s)\circ \phi_{dR}=\phi_{\crys}\circ\CP_v(0).\end{equation} Note also that $\phi_{\crys}$ is induced from the reduction $\tilde{\phi}_v\in \End(\tilde{E}_{0,v})$ of $\phi$. Thus by \Cref{lemmacrysordbasis} we get that there exist $\zeta_v$, $\xi_v$ such that $[\phi_{\crys}]=\begin{pmatrix}	\zeta_v&0\\0&\xi_v\end{pmatrix}$ for the matrix of $\phi_{\crys}$ with respect to the basis of $H^1_v(E_0)$ fixed above.

Writing $\phi_{dR}\omega_0=a\cdot \omega_s$ and $\phi_{dR}\eta_0=c\cdot \omega_s+d\cdot \eta_s$ we may thus translate \eqref{eq:isogcompat} into the following equation between period matrices
\begin{equation}\label{eq:isogord1}	\begin{pmatrix}
			\iota_v(a)&0\\ \iota_v(c)&\iota_v(d)
	\end{pmatrix}\CP_v(s)=\CP_v(0)\begin{pmatrix}\zeta_v&0\\0&\xi_v\end{pmatrix}\end{equation}

Setting $y_{i,j}:=\iota_v(Y_{i,j}(x(s)))$, since $\CP_v(s)= (y_{i,j})\cdot \CP_v(0)$, we may rewrite the above as 
\begin{equation}
	\begin{pmatrix}
		\iota_v(a)&0\\ \iota_v(c)&\iota_v(d)
	\end{pmatrix}\cdot (y_{i,j})= \CP_v(0)\cdot \begin{pmatrix}\zeta_v&0\\0&\xi_v\end{pmatrix}\cdot \CP_v(0)^{-1}.
\end{equation}Since $\CP_v(0)=\diag(\varpi_v,\varpi_v^{-1})$ is diagonal, we easily get 
\begin{equation}
	\begin{pmatrix}
		\iota_v(a)&0\\\iota_v({c})&\iota_v(d)
	\end{pmatrix}\cdot (y_{i,j})=\begin{pmatrix}
	\zeta_v&0\\0&\xi_v
	\end{pmatrix}.
\end{equation}From this, given that $a\neq 0$ since $\phi_{dR}$ is invertible, we find $y_{1,2}=0$. \end{proof}

\begin{remark}\label{remarkordinaryproximity}
	Let $s$ be such that $\CE_s$ is CM. We note that $s$ will be $v$-adically close to $s_0$ with respect to some $v$ that is ordinary for $E_0$ if and only if the elliptic curves $E_0$ and $\CE_s$ have the same CM-field as their algebra of endomorphisms. Indeed $\End_{\bar{\Q}}(\CE_P)\hookrightarrow \End_{\bar{\mathbb{F}}_{p(v)}}^{0}(\tilde{E}_{0,v})$, for $P\in\{s,s_0\}$, since $\tilde{E}_{0,v} =\tilde{E}_{s,v}$ by our conventions on proximity. On the other hand, since $\CE_P$ is CM and $\tilde{E}_{0,v}$ is ordinary these algebras have to be equal for dimension reasons.
	
	In other words, $v$-adic relations among the values of our family of G-functions at points of interest for places $v$ that are ordinary for $E_0$ are only pertinent in the context described in \Cref{thmordrels}. 
\end{remark}
\subsection{Relations at supersingular places}\label{section:cmsuperpadic}

We retain the notation and conventions of \Cref{section:finrelsnotation} for our family of elliptic curves $f:\CE\rightarrow S$ and point $s_0\in S(K)$ whose fiber $E_0:=\CE_{s_0}$ is a CM elliptic curve. We let $F_0:=\End_{\bar{\Q}}^{0}(E_0)$ and consider the set 
\begin{center}$\Sigma_{\ssing,\unr}(s_0):=\{w\in\Sigma_{K,f}:w \text{ is supersingular for }E_0\text{ and } v\not| \disc(F_0) \}$,\end{center}of supersingular places of $E_0$ that are over primes unramified in $F_0$.

Here we restate a theorem of Y. Andr\'e which is the natural analogue of \Cref{thmordrels} for places of supersingular reduction of the ``center'' $E_0$. 
\begin{prop}[Y. Andr\'e \cite{andremots}, \cite{andremsj}]\label{propandresupercm}
	Let $s\in S(\bar{\Q})$ be such that $\CE_s$ is also a CM elliptic curve and consider the set of places \begin{center}
		 $\CP(s,s_0):=\{v\in \Sigma_{K(s),f}:\exists w\in\Sigma_{\ssing,\unr}(s_0), v|w \text{ and } s \text{ is }v\text{-adically close to }s_0\}\}$.
\end{center}
	
	Then there exists a polynomial $R_{s,\ssing,\unr}\in \bar{\Q}[X_{i,j}:1\leq i,j\leq 2]$ and an effectively computable positive constant $c_{\ssing}$ depending at most on $f:\CE\rightarrow S$ and $s_0$, such that the following hold
	\begin{enumerate}
		\item $R_{s,\ssing,\unr}$ has coefficients in the compositum $L_s$ of $K(s)$ with $\End_{\bar{\Q}}^{0}(\CE_s)$,
		
		\item $\iota_v(R_{s,\ssing,\unr}(Y_G(x(s))))=0$ for all $v\in\Sigma_{L_s,f}$ for which there exists $w\in\CP(s,s_0)$ with $v|w$, 
		
		\item $R_{s,\ssing,\unr}$ is homogeneous with $\deg(R_{s,\ssing,\unr})\leq c_{\ssing}\cdot|\CP(s,s_0)|$, and 
		
		\item $R_{s,\ssing,\unr}\notin \langle X_{1,1}X_{2,2}-X_{1,2}X_{2,1}-1 \rangle $.
	\end{enumerate}\end{prop}
\begin{proof}
	We give a quick sketch of a proof here, especially since the same ideas appear in greater technical detail in the proof of the corresponding fact for QM abelian surfaces in the sequel \cite{papaspadicpart3}. 
	
	Let $d_s$ be such that $\End_{\bar{\Q}}^{0}(\CE_s)=\Q(\sqrt{-d_s})$. Let us also fix $v\in \Sigma_{L_s,f}$ such that $s$ is $v$-adically close to $s_0$ and $v$ is supersingular for $E_0$ and unramified in $\End_{\bar{\Q}}^{0}(E_0)=:\Q(\sqrt{-d_0})$. In $\S 5$ of \cite{andremots} Andr\'e finds $a_s\in L_s$ (denoted in $\sigma$ in loc. cit.) and $m_{s,v}\in L_s$ (denoted $m_v$ in loc. cit. ) such that the polynomial defined by \begin{equation}\label{eq:andresupersingularpoly}
		R_{s,v}:=(m_{s,v}+2\sqrt{d_sd_0})X_{1,1}(X_{2,2}+a_sX_{{1,2}})-(m_{s,v}-2\sqrt{d_sd_0})X_{1,2}(X_{2,1}+a_sX_{1,1}),
	\end{equation}satisfies $\iota_v(R_{s,v}(Y_G(x(s))))=0$.
	
	Consider the matrices $S(n,l):=\begin{pmatrix}n&l\\0&\frac{1}{n}\end{pmatrix}\in \SL_2(\C)$ and  $T(n):=\begin{pmatrix}0&\frac{1}{n}\\-n&0\end{pmatrix}\in \SL_2(\C)$. Assume that we had $R_{s,v}\in \langle X_{1,1}X_{2,2}-X_{1,2}X_{2,1}-1 \rangle$. Then $R_{s,v}(S(n,l))=0$ implies $nl(4a_s\sqrt{d_0d_s})+(m_{s,v}+2\sqrt{d_0d_s})=0$ while $R_{s,v}(T(n))=0$ gives $m_{s,v}-2\sqrt{d_0d_s}=0$. Since the first of these holds for all $n$, $l\in \C$ we get $a_s=0$, $m_{s,v}+2\sqrt{d_0d_s}=0$, and  $m_{s,v}-2\sqrt{d_0d_s}=0$. The last two give a contradiction since $d_sd_0\neq 0$. 
	
	The polynomial we want is nothing but $R_{s,\ssing,\unr}:=\underset{v}{\Pi} R_{s,v}$, with $R_{s,v}$ as in \eqref{eq:andresupersingularpoly}, where the product ranges over all the $v\in\Sigma_{L_s}$ which are such that $|x(s)|_v<\min\{1,R_v(Y_G)\}$ and $v|w$ for some $w\in \Sigma_{\ssing,\unr}(s_0)$. All properties follow trivially by definition.\end{proof}

\subsection{Relations at archimedean places}\label{section:cmarchimedean}

In the archimedean case the analogous statement to \Cref{propandresupercm}, due to F. Beukers, is the following:
\begin{prop}[Theorem $3.3$, \cite{beukers}]\label{propbeukersarch}
		Let $s\in S(\bar{\Q})$ be such that $\CE_s$ is also a CM elliptic curve. Then there exists a polynomial $R_{s,\arch}\in \bar{\Q}[X_{i,j}:1\leq i,j\leq 2]$ such that the following hold
	\begin{enumerate}
		\item $R_{s,\arch}$ has coefficients in the compositum $L_s$ of $K(s)$ with $\End_{\bar{\Q}}^{0}(\CE_s)$,
		
		\item $\iota_v(R_{s,\arch}(Y_G(x(s))))=0$ for all $v\in\Sigma_{L_s,\infty}$ for which $|x(s)|_v<\min\{1,R_v(Y_G)\}$,
		
		\item $R_{s,\arch}$ is homogeneous with $\deg(R_{s,\arch})\leq c_{\arch}\cdot [\Q(s):\Q]$, where $c_{\arch}$ is some absolute constant, and 
		
		\item $R_{s,\arch}\notin \langle X_{1,1}X_{2,2}-X_{1,2}X_{2,1}-1 \rangle $.
		\end{enumerate}\end{prop}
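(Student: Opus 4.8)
This is Theorem $3.3$ of \cite{beukers}; I would recover it within the present framework by an argument parallel to that of \Cref{propandresupercm}, with the role of the $q$-th power Frobenius on $H^1_{\crys}(\tilde{E}_{0,v})$ now played by the CM endomorphism of $\CE_s$ acting on its own Betti realisation. Fix an archimedean place $v\in\Sigma_{L_s,\infty}$ with $|x(s)|_v<\min\{1,R_v(Y_G)\}$, and let $\CP_v$ be the de Rham--Betti period matrix built from the basis $\{\omega,\eta\}$ of \Cref{section:finrelsnotation} and the symplectic basis $\{\gamma_0,\delta_0\}$ of $H^1_v(E_0)$ supplied by part $(1)$ of \Cref{lemmacmperiods}; thus $\CP_v(s_0)=\diag\!\big(\tfrac{\varpi_v}{2\pi i},\varpi_v^{-1}\big)$ and $\CP_v(s)=\iota_v(Y_G(x(s)))\,\CP_v(s_0)$. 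Since the fibre $\{\omega_0,\eta_0\}$ of $\{\omega,\eta\}$ at $s_0$ is an eigenbasis for the CM action of $E_0$ (see \Cref{section:finrelsnotation}) and $\CP_v(s_0)$ is diagonal, $\{\gamma_0,\delta_0\}$ is likewise an eigenbasis for that action and so has coordinates in $F$ relative to any $\Q$-basis of $H^1_B(E_0,\Q)$. Writing $\End^0_{\bar{\Q}}(\CE_s)=\Q(\sqrt{-d_s})$ and $e_s=\sqrt{-d_s}$, functoriality of the de Rham--Betti comparison under $e_s$ yields $\CP_v(s)\circ e_{s,dR}=e_{s,B}\circ\CP_v(s)$.

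The next step extracts coordinates. Since $\omega$ is a section of $F^1H^1_{dR}(\CE/S)$, its fibre $\omega_s$ spans the line $F^1H^1_{dR}(\CE_s)$, which $e_{s,dR}$ preserves (being a morphism of Hodge structures); hence $e_{s,dR}\omega_s=\epsilon_s\omega_s$ with $\epsilon_s\in\{\pm\sqrt{-d_s}\}\subseteq\Q(\sqrt{-d_s})$. Applying the intertwining identity to $\omega_s$ shows that $\CP_v(s)(\omega_s)$ is an $\epsilon_s$-eigenvector of $e_{s,B}$; and since $\CP_v(s_0)$ is diagonal, $\CP_v(s)(\omega_s)$ is $\tfrac{\varpi_v}{2\pi i}$ times the first column $\big(\iota_v(Y_{1,1}(x(s))),\iota_v(Y_{2,1}(x(s)))\big)$ of $\iota_v(Y_G(x(s)))$. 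On the Betti side, trivialising the local system $R^1f_*\Q$ over the simply connected disc $\Delta_v$ identifies $H^1_B(\CE_s,\Q)$ with $H^1_B(E_0,\Q)$ over $\Q$, so the $\Q$-linear operator $e_{s,B}$ is represented in the $F$-rational basis $\{\gamma_0,\delta_0\}$ by a matrix $N_s$ with entries in $F\subseteq K$. Its $\epsilon_s$-eigenline is then defined over the compositum $F\cdot\Q(\sqrt{-d_s})\subseteq L_s$ and is spanned by some $(\mu_1,\mu_2)\in L_s^2\setminus\{0\}$, whence $\iota_v\!\big(\mu_2\,Y_{1,1}(x(s))-\mu_1\,Y_{2,1}(x(s))\big)=0$. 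I would take $R_{s,v}:=\mu_2 X_{1,1}-\mu_1 X_{2,1}$, a nonzero homogeneous linear form with coefficients in $L_s$.

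Property $(4)$ for $R_{s,v}$ is immediate: $X_{1,1}X_{2,2}-X_{1,2}X_{2,1}-1$ is an irreducible quadric, so the ideal it generates contains no nonzero polynomial of degree $<2$; hence $R_{s,v}\notin\langle X_{1,1}X_{2,2}-X_{1,2}X_{2,1}-1\rangle$, equivalently $R_{s,v}(Y_G(x))\not\equiv0$ on the functional level by \Cref{trivialrelations}. Finally I would set $R_{s,\arch}:=\prod_v R_{s,v}$, the product over the finitely many $v\in\Sigma_{L_s,\infty}$ with $|x(s)|_v<\min\{1,R_v(Y_G)\}$ (and $R_{s,\arch}:=1$ if there are none). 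Then $(1)$ and $(2)$ hold by construction, $R_{s,\arch}$ is homogeneous, $(4)$ holds because the ideal above is prime and no factor lies in it, and $(3)$ holds because there are at most $[L_s:\Q]\le 2[K:\Q]\,[\Q(s):\Q]$ such places while each factor is linear, so one may take $c_{\arch}=2[K:\Q]$.

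The only ingredient that is not pure linear algebra is the Betti bookkeeping of the second paragraph: one must verify that the symplectic basis $\{\gamma_0,\delta_0\}$ of \Cref{lemmacmperiods} is $F$-rational, so that (after trivialising $R^1f_*\Q$ on $\Delta_v$) the CM operator $e_{s,B}$ of $\CE_s$ is represented over $F$ — this, together with $F\subseteq K$ and $\Q(\sqrt{-d_s})\subseteq L_s$, is what puts $(\mu_1,\mu_2)$ in $L_s^2$ and so accounts for the appearance of $L_s$ in $(1)$ — and one must carry the $\tfrac{1}{2\pi i}$ twist of \Cref{lemmacmperiods} through the functoriality identity without slips. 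Neither point poses a real difficulty, so once \Cref{lemmacmperiods} is in hand the statement is essentially elementary.
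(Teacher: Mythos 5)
The gap is in your second paragraph, at the step ``since $\CP_v(s_0)$ is diagonal, $\CP_v(s)(\omega_s)$ is $\tfrac{\varpi_v}{2\pi i}$ times the first column of $\iota_v(Y_G(x(s)))$''. The two diagonal entries of $\CP_v(s_0)$ are \emph{different} transcendental numbers, $\tfrac{\varpi_v}{2\pi i}$ and $\varpi_v^{-1}$, and they rescale the two Betti eigendirections separately; they do not factor out of the row (or column — the transpose convention is immaterial) of $\CP_v(s)=\iota_v(Y_G(x(s)))\CP_v(s_0)$ that records the Betti coordinates of the image of $\omega_s$. With the paper's conventions, the ones used in \eqref{eq:isogord1} in the proof of \Cref{thmordrels}, those coordinates are $\bigl(\tfrac{\varpi_v}{2\pi i}\,y_{1,1},\ \varpi_v^{-1}\,y_{1,2}\bigr)$, so the eigenline condition only yields $\mu_2\tfrac{\varpi_v}{2\pi i}y_{1,1}=\mu_1\varpi_v^{-1}y_{1,2}$, a relation whose coefficient ratio involves $\varpi_v^{2}/(2\pi i)$, not the linear relation $\mu_2y_{1,1}-\mu_1y_{1,2}=0$ with coefficients in $L_s$. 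In the ordinary case of \Cref{thmordrels} the analogous argument does work, but only because there the eigenvectors of $e_{s,\crys}$ coincide with the chosen Frobenius eigenbasis, i.e.\ one of $\mu_1,\mu_2$ vanishes; in the archimedean case the eigenlines of $e_{s,B}$ are the parallel-transported Hodge lines of $\CE_s$, which differ from those of $E_0$ for $s\neq s_0$, so $\mu_1\mu_2\neq0$ and the transcendental factor genuinely survives.

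As a result your linear form $R_{s,v}$ does not satisfy property $(2)$, and cannot: since $\det Y_G(x(s))=1$, the entries $y_{1,1},y_{1,2}$ are not both zero, so your relation combined with the true one would force $\varpi_v^{2}/(2\pi i)\in\bar{\Q}$, contradicting Chudnovsky's theorem that $\varpi_v$ and $\pi$ are algebraically independent for a CM elliptic curve. This is precisely why the archimedean relations are quadratic: in Beukers' polynomials, which the paper's proof simply imports from Theorem $3.3$ of \cite{beukers}, e.g.\ $R_{s,v,1}=X_{1,1}(X_{2,2}+a_sX_{1,2})-m_{s,v}(X_{1,1}X_{2,2}-X_{1,2}X_{2,1})$, every monomial pairs the two eigendirections, so only the product of the two diagonal periods (essentially $(2\pi i)^{-1}$, controlled by the polarization) appears and cancels. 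The paper then checks non-triviality by evaluating on the test matrices $S(n,l)$, $T(n)$ and takes the product over the relevant archimedean places. Your bookkeeping at the end (product over places, homogeneity, primality of $\langle X_{1,1}X_{2,2}-X_{1,2}X_{2,1}-1\rangle$, degree count) is fine, but the core local relation must be the quadratic one; the linear eigenvector relation you propose is false in general.
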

	
	\begin{proof}Our reformulation of Beukers' result follows practically identically from the same argument as above. Given $v\in\Sigma_{L_s,\arch}$ with $|x(s)|_v<\min\{1,R_v(Y_G)\}$ Beukers shows that there exist $a_s$, $m_{s,v}$, $n_{s,v}\in L_s$ such that for the polynomials 	\begin{center}
					$ R_{s,v,1}:=X_{1,1} (X_{2,2}+a_sX_{1,2})-m_{s,v}(X_{1,1}X_{2,2}-X_{1,2}X_{2,1})$, and 
		
				$ R_{s,v,2}:=X_{1,2} (X_{2,1}+a_sX_{1,1})-n_{s,v}(X_{1,1}X_{2,2}-X_{1,2}X_{2,1})$\end{center}
we have $\iota_v(R_{s,v,j}(Y_G(x(s))))=0$.

First note that, since $\det(Y_G(x))=1$ generically, if $R_{s,v,1}\in  \langle X_{1,1}X_{2,2}-X_{1,2}X_{2,1}-1 \rangle $ we would have, arguing as in the previous proof, $a_{s}=0$ and $m_{s,v}=1$. This may be seen by ``testing'' at the family $S(n,l)$ for example. But then we get that $X_{1,1}X_{2,2}-1\in \langle X_{1,1}X_{2,2}-X_{1,2}X_{2,1}-1 \rangle $ which is trivially false.

The polynomial we want is nothing but the product $\R_{s,\arch}:=\underset{v}{\Pi}R_{s,v,1}$, where the $v$ range over those archimedean places of $L_s$ for which $|x(s)|_v<\min\{1,R_v(Y_G)\}$.\end{proof}
\subsection{Relations at ramified places}\label{section:ramifiedprimescm}

The missing piece in completing the picture in \Cref{naiverelations} is the construction of relations at points $s$ with $\CE_s$ CM elliptic curves among the values of our G-functions at places $v$ that are over primes $p$ ramified in $F_0$. 

\begin{prop}\label{propramifiedcm}
	Let $f:\CE\rightarrow S$ be as in \Cref{section:finrelsnotation}. Let $s\in S(\bar{\Q})$ be such that $\CE_s$ is CM.
	
	Then, there exists a polynomial  $R_{s,\ram}\in \bar{\Q}[X_{i,j}:1\leq i,j\leq 2]$ such that the following hold
	\begin{enumerate}
		\item $R_{s,\ram}$ has coefficients in some $L_s/K(s)$ with $[L_s:K(s)]$ bounded by an absolute constant,
		
		\item $\iota_v(R_{s,\ram}(Y_G(x(s))))=0$ for all $v\in\Sigma_{L_s,f}$ for which $|x(s)|_v<\min\{1,R_v(Y_G)\}$ and $v|\disc(F_0)$,
		
		\item $R_{s,\ram}$ is homogeneous with $\deg(R_{s,\ram})\leq c_{\ram}[K(s):\Q]$, where $c_{\ram}$ is some absolute constant, and 
		
		\item $R_{s,\ram}\notin \langle X_{1,1}X_{2,2}-X_{1,2}X_{2,1}-1 \rangle $.
\end{enumerate}\end{prop}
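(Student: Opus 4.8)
The plan is to mimic the strategy used for supersingular places in \Cref{propandresupercm}, but to carefully track what goes wrong at a place $v\mid 3$ and then to exploit the special structure of $F_0:=\Q(\sqrt{-3})$ to repair it. The source of the obstruction is that, since $3$ ramifies in $F_0$, the field $F_0$ does not embed into the unramified extension $K_{v,0}$, so the $F_0$-action on $H^1_{\crys}(\tilde{E}_{0,v})$ does not split that crystal into eigenlines over $K_{v,0}$, and the clean diagonal form of the period matrix in \Cref{lemmacmperiods}(2) is unavailable. First I would pass to the ramified quadratic extension $K_v' := K_v \cdot F_{0,v}$ (equivalently adjoin $\sqrt{-3}$), noting this is a ramified degree-$2$ extension of $K_{v,0}$, and use Ogus' extension of the Berthelot--Ogus comparison to the ramified case (cited in the excerpt via \cite{ogus}) to still get a relative comparison isomorphism over a disc $\Delta_v$ after base change to $K_v'$. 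Over $K_v'$ the element $e_0=\sqrt{-3}\in F_0$ now acts and I can diagonalize $e_{0,\crys}$, writing the relative period matrix $\CP_v(s)=\iota_v(Y_G(x(s)))\cdot \CP_v(s_0)$ with $\CP_v(s_0)$ an explicit matrix depending on a ramified period parameter $\varpi_v$; the point is that $\CP_v(s_0)$ need no longer be diagonal, but its off-diagonal structure is constrained by the single $F_0$-eigenvector decomposition together with the Riemann form normalization.

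The crux of the argument is then the isogeny comparison, exactly as in the ordinary and supersingular cases. Since $\CE_s$ is CM with $\Q(\sqrt{-d_s})\neq \Q(\sqrt{-3})$, the curves $\tilde{\CE}_{s,v}=\tilde{E}_{0,v}$ (by proximity) are supersingular over $k_v$, so $\End^0_{\bar k_v}(\tilde E_{0,v})=D_{3,\infty}$, which contains \emph{both} $F_0$ and $\Q(\sqrt{-d_s})$. Over the special fibre these two orders generate a large subalgebra, and the key computation is to take an isogeny $\phi:\CE_s\to E_0$ over a bounded extension $L_s$, reduce it mod $v$, and write the matrix $[\phi_{\crys}]$ of the induced map on $H^1_v$ in the basis diagonalizing $e_{0,\crys}$; combined with the de Rham matrix $[\phi_{dR}]$ (which respects the $F_0$-filtration only partially, since $\Q(\sqrt{-d_s})$ does \emph{not} preserve the $F_0$-eigenlines in de Rham cohomology over $K$) and the functoriality identity $\CP_v(s)\circ\phi_{dR}=\phi_{\crys}\circ\CP_v(s_0)$, this yields, after clearing $\varpi_v$, a bilinear-in-the-entries relation $R_{s,v}\big(\iota_v(Y_G(x(s)))\big)=0$. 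Here the special role of $-3$ enters: the ramified period $\varpi_v$ satisfies $\varpi_v^2\in$ (an explicit algebraic extension) precisely because the ramification index is $2$ and the CM field is quadratic, and it is this that makes the resulting relation homogeneous of \emph{bounded} degree — the degree of $R_{s,\ram}$ depends only on $[\Q(s):\Q]$ through the degree of $L_s$ and the isogeny, not on the number of ramified places (there is only one, namely $3$). I would then define $R_{s,\ram}$ as (a suitable power of) the product of $R_{s,v}$ over the at most $[L_s:\Q]$-many places $v\mid 3$ with $|x(s)|_v<\min\{1,R_v(Y_G)\}$, which gives properties (1)--(3); one must check that the coefficients of $R_{s,v}$ descend from $K_v'$ back to $L_s$, which follows by a Galois-descent / Chevalley--Weil argument since the comparison and the isogeny data are defined over $L_s$ up to the auxiliary ramified extension that is killed by taking the product over the $\gal$-orbit.

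For property (4), the non-triviality, I would argue as in the proof of \Cref{propandresupercm}: suppose $R_{s,v}\in\langle X_{1,1}X_{2,2}-X_{1,2}X_{2,1}-1\rangle$ and evaluate on the two test families $S(n,l)=\begin{pmatrix}n&l\\0&1/n\end{pmatrix}$ and $T(n)=\begin{pmatrix}0&1/n\\-n&0\end{pmatrix}$, which lie in $\SL_2$ and on which $X_{1,1}X_{2,2}-X_{1,2}X_{2,1}-1$ vanishes identically; the resulting polynomial identities in $n,l$ force the structural constants of $R_{s,v}$ (the analogues of $a_s$, $m_{s,v}$, and the ramified period data) to take degenerate values, and one derives a contradiction from $d_s\neq 3$, i.e.\ from $d_s\cdot 3\neq 0$ exactly as $d_sd_0\neq 0$ was used before. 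The main obstacle I anticipate is genuinely computational and is the reason the paper includes a Mathematica appendix (\Cref{section:appendix}): unlike the unramified supersingular case, where $\CP_v(s_0)$ is diagonal and the relation is a clean $2\times 2$ identity, here $\CP_v(s_0)$ carries a nontrivial off-diagonal entry coming from the ramified period normalization of the Riemann form, so the explicit shape of $R_{s,v}$ is messier and verifying that it is not a multiple of $X_{1,1}X_{2,2}-X_{1,2}X_{2,1}-1$ — and that its degree really is absolutely bounded — requires an honest elimination computation rather than a one-line test. The other delicate point is making sure Ogus' ramified comparison theorem is applied in the correct relative/family setting over the disc $\Delta_v$ and that it is compatible with the Riemann form and with isogenies; I would isolate this as a lemma citing \cite{ogus} and \cite{bertogus} and the relative framework of \cite{papaspadicpart1}.
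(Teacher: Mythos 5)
There is a genuine gap at the heart of your argument: the ``key computation'' is built on an isogeny $\phi:\CE_s\to E_0$ over a number field, but no such isogeny exists. Isogenous elliptic curves in characteristic zero have the same endomorphism algebra, and the hypothesis of the proposition is precisely that $\End^0_{\bar{\Q}}(\CE_s)=\Q(\sqrt{-d_s})\neq\Q(\sqrt{-3})=\End^0_{\bar{\Q}}(E_0)$ (the isogenous case is handled separately, in \Cref{propisogenouscm}). The reductions mod $v$ do coincide and are supersingular, so a mod-$v$ isogeny (even the identity) exists, but that gives you no map $\phi_{dR}$ on de Rham cohomology in characteristic zero, so the functoriality identity $\CP_v(s)\circ\phi_{dR}=\phi_{\crys}\circ\CP_v(s_0)$ you want to exploit cannot even be written down. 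The paper's mechanism is different: it uses the endomorphism $\alpha_s=\sqrt{-d_s}$ of $\CE_s$ \emph{itself}, whose compatibility with the comparison isomorphism at $s$, combined with $\CP_v(s)=\iota_v(Y_G(x(s)))\CP_v(0)$, yields the matrix identity \eqref{eq:cmramstart}. The decisive inputs making this usable are two algebraicity statements coming from Andr\'e's $p$-adic Betti lattice at the (ramified, hence supersingular) place: the matrix $[\alpha_s]_v$ has entries $d_{i,j}\in\bar{\Q}$, and the period entry $\pi_{1,1}$ is algebraic (Ogus--Coleman--Urfels). Your sketch contains no substitute for these; the assertion that the ramified period $\varpi_v$ satisfies ``$\varpi_v^2\in$ an explicit algebraic extension'' is unsubstantiated, and without some such rationality input you cannot extract a relation with coefficients in $L_s$ from an identity that still involves transcendental $p$-adic periods.

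Beyond this, the remaining structure of the paper's proof differs from what you propose in ways that matter for properties (3) and (4). The unknown period entries $\pi_{1,2},\pi_{2,1},\pi_{2,2}$ are eliminated by hand through a case analysis (on the vanishing of $\pi_{1,1}$, $z_{1,2}$, $d_{2,1}$, and an auxiliary coefficient $A$), each branch producing a homogeneous relation of degree at most $4$ in the entries of $Y_G$; the degree bound in (3) then comes from taking the product over the at most $[L_s:\Q]$ places over $3$, as you correctly anticipate. Non-triviality, however, is not established by evaluating on the test families $S(n,l)$ and $T(n)$ as in \Cref{propandresupercm}: the ramified-case polynomials are messier, and the paper verifies (4) by reducing each candidate modulo a Gr\"obner basis of $\langle X_{1,1}X_{2,2}-X_{1,2}X_{2,1}-1\rangle$ (the Mathematica appendix) together with \Cref{claimcmram}, a quaternion-algebra argument showing $d_{1,2}$ and $d_{2,1}$ cannot both vanish, which is exactly where the hypothesis $\Q(\sqrt{-d_s})\neq\Q(\sqrt{-3})$ enters. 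You correctly foresaw that a computation would be needed, but the argument you build it on would have to be replaced wholesale by the endomorphism-plus-Betti-lattice mechanism above.
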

\begin{proof}We begin by noting that in any case here we must have that $F_s=F_0$, i.e. $\CE_s$ and $E_0$ have CM by the same CM field. To see this we start by writing $d_P:=\disc(\End_{\bar{\Q}}(\CE_P))$ for $P\in\{s,s_0\}$. If $d_s=d_{s_0}$ then we are done. If $d_s\neq d_{s_0}$ our assertion follows from \Cref{lemproximityramified}.
	
Now we may argue much as  in the proof of \Cref{thmordrels} to find an isogeny $\phi:\CE_s\rightarrow E_0$ defined over $\bar{\Q}$. The extension $L_s$ will be the compositum of $K(s)$ with the field of definition of this $\phi$. The degree $[L_s:K(s)]$ may be bounded by an absolute constant, see for example \cite{silverberg}. From now on we fix $v\in \Sigma_{L_s,f}$ which is such that $v|\disc(F_0)$ and such that our $s$ is $v$-adically close to $s_0$.

We write $\phi_{dR}$ and $\phi_{\crys}$ for the pullbacks of $\phi$ on the level of de Rham and Crystalline cohomology respectively. Once again we will have that $\phi_{\crys}$ will be the pullback on the level of Crystalline cohomology of the reduction modulo $v$ of $\phi$, which we denote by $\tilde{\phi}_v$. As in the proof of \Cref{thmordrels}, by our conventions on $v$-adic proximity in \Cref{defnvadicprox}, we get that $\tilde{\phi}_v\in \End(\tilde{E}_{0,v})$ since $\tilde{\CE}_{s,v}=\tilde{E}_{0,v}$.

Once again we get \eqref{eq:isogcompat}, which we may rewrite, much as in \eqref{eq:isogord1}, as 
\begin{equation}\label{eq:isogram1}	[\phi]_{dR}\CP_v(s)=\CP_v(0)[\phi]_{\crys},
\end{equation}where $[\phi]_{*}$ denotes the matrix induced from $\phi_{*}$, for $*\in \{dR,\crys\}$, with the implied bases on each side of the de Rham-Crystalline comparison isomorphism.
	
Working much as in the proof of \Cref{thmordrels}, we may assume that $[\phi]_{dR}=\begin{pmatrix}\iota_v(a)&0\\ \iota_v(c)&\iota_v(d)\end{pmatrix}$. Setting $y_{i,j}:=\iota_v(Y_{i,j}(x(s)))$, since $\CP_v(s)= (y_{i,j})\cdot \CP_v(0)$, we may rewrite \eqref{eq:isogram1} as 
	\begin{equation}\label{eq:ramif2}
		\begin{pmatrix}
			\iota_v(a)&0\\ \iota_v(c)&\iota_v(d)
		\end{pmatrix}\cdot (y_{i,j})= \CP_v(0)\cdot [{\phi}]_{\crys} \cdot \CP_v(0)^{-1}.
	\end{equation}
	
Taking traces in \eqref{eq:ramif2} we conclude that 
	\begin{equation}\label{eq:ramif3}
		\iota_v(a Y_{1,1}(x(s))+c Y_{1,2}(x(s))+dY_{2,2}(x(s)))=t_v:=\tr([\phi]_{\crys}).
	\end{equation}
	
Practically, all we need at this point is to note that $t_v\in \Q$. This follows from the fact that $\phi_{\crys}$ is the pullback of the endomorphism $\tilde{\phi}_v$ of $\tilde{E}_{0,v}$. See the proof of Lemma $3.5$ in \cite{daworrpap} for more details.

At this point, much as in the proof of \Cref{propandresupercm}, we set $R_{s,v}:=(aX_{1,1}+c Y_{1,2}+dY_{2,2})^2-t_v\det(Y_{i,j})$. Note that this way $R_{s,v}$ is a homogeneous degree $2$ polynomial and from \eqref{eq:ramif3} we have $\iota_v(R_{s,v}(Y_G(x(s))))=0$, since $\det(Y_{i,j}(x))=1$ on the level of power series. The polynomial we want is nothing but $R_{s,\ram}:=\prod R_{s,v}$, where the product is over all $v\in\Sigma_{L_s,f}$ that divide $\disc(F_0)$ and are such that $s$ is $v$-adically close to $s_0$.

To check that $R_{s,\ram}$ defines a non-trivial relation among the values of $Y_G$ at $s$ we may work again exactly as in \Cref{propandresupercm}. In more detail, we need only check that $R_{s,v}$ is not in the ideal $\langle X_{1,1}X_{2,2}-X_{1,2}X_{2,1}-1\rangle$. Testing with the same $S(n,l)$ as in \Cref{propandresupercm} is easily seen to force $a=0$ which contradicts the invertibility of the matrix $[\phi]_{dR}$ corresponding to our isogeny $\phi$.\end{proof}

\begin{remark}
	We note that we need no particular property of the basis of the Crystalline cohomology of the central fiber $H^1_v(E_0)$ in this construction. Rather everything follows from the existence of the isogeny $\phi$, which is guaranteed to us by \Cref{lemproximityramified}.
\end{remark}
\section{Height bounds and applications}\label{section:applications}

In this final section we apply Andr\'e-Bombieri's ``Hasse principle for the values of G-functions'' to reach the height bounds alluded to in \Cref{section:heightboundsintro}. In the process, we attempt to isolate the difficulty of obtaining the ``ideal height bounds'' one would want here to conclude an effective version of \cite{siegel}.

\subsection{Height bounds}\label{cmheightboundproof}

We start by presenting a height bound for the singular moduli of CM elliptic curves whose CM is given by the same CM field as that of a fixed CM elliptic curve. This takes the following form:
\begin{prop}\label{propisogenouscm}
	Let $j_0$ be the $j$-invariant of a CM-elliptic curve $E_0$ defined over a number field $K$ and consider the set \begin{center}
		$A(j_0):=\{j: \End^{0}_{\bar{\Q}}(E_j)=\End^{0}_{\bar{\Q}}(E_0)\}$,
	\end{center}of $j$-invariants whose corresponding elliptic curves, i.e. $E_j$, have CM by the same CM field as $E_0$.
	
	Then, there exist effectively computable constants $C_1$, $C_2>0$, depending only on $K$ and $j_0$, such that
	\begin{center}
		$h(j)\leq C_1 +C_2\log ([\Q(j):\Q])$, 
	\end{center}for all $j\in A(j_0)$.\end{prop}
\begin{proof} Without loss of generality, by possibly replacing $K$ by a finite extension $K_0/K$ whose degree depends on $j_0$, we may assume that $E_0$ has everywhere good reduction over $K$, see \cite{serretate}.
	
	Let $K(j)$ be the compositum of $K$ and $\Q(j)$. By \cite{silverberg} there exists an extension $K'(j)/K(j)$, with $[K'(j):K(j)]$ bounded by an absolute constant, such that $\End^{0}_{\bar{\Q}}(E_j)=\End^{0}_{K'(j)}(E_j)$ and $\End^{0}_{\bar{\Q}}(E_0)=\End^{0}_{K'(j)}(E_0)$. Since $\End^{0}_{\bar{\Q}}(E_j)=\End^{0}_{\bar{\Q}}(E_0)$ the two elliptic cures are isogenous over $\bar{\Q}$. By Lemma $6.1$ of \cite{masserwuisogellcurves} there exists an extension $L(j)/K^{\prime}(j)$ with $[L(j):K^{\prime}(j)]\leq 12$ such that such an isogeny $\phi$ is defined over $L(j)$.
	
	By Theorem $1.1$ of \cite{pazuki} there exist effectively computable positive constants $c_1$ and $c_2$, depending at most on $j_0$, such that 
	\begin{equation}h(j)\leq c_1+c_2\log(\deg(\phi)).\end{equation}
	On the other hand, by Theorem $1.4$ of \cite{gaudronremondpolarisations} there exists such an isogeny with $\deg(\phi)\leq \kappa(E_0\times_{K}L(j))$ where $\kappa(A)$ is the quantity defined at the beginning of the introduction of loc. cit.. By definition here we have \begin{center}
		$\log(\kappa(E_0\times_KL(j)))\leq c_1^{\prime}+c_2^{\prime}\log([L(j):\Q])+$
		
		$+c_3^{\prime}\log(\max\{h_F(E_0\times_KL(j)),\log([L(j):\Q]),1\})$.
	\end{center}
	
	Now trivially, $\log(\max\{h_F(E_0\times_KL(j)),\log([L(j):\Q]),1\})\leq \max\{h_F(E_0\times_KL(j)),\log([L(j):\Q]),1\}\leq h_F(E_0\times_KL(j))+1+\log([L(j):\Q])$. Since $E_0$ has, by assumption, everywhere good reduction, and thus also semi-stable reduction, over $K$, we have by basic properties of the stable Faltings height that $h_F(E_0\times_KL(j))=h_F(E_0)$. The conclusion now follows trivially from the above.\end{proof}
	
	\begin{remark}
		We believe that the above is known to experts. We have opted to include this proof here for two reasons. 
		
		Firstly, the tools used in the proof are either standard in proofs of lower bounds on Galois orbits, or, in the case of Pazuki's result, follow much that same ``mantra'' as the output of the G-functions method. In other words, the ``mantra'' of bounding the Weil height of a point of interest by arithmetic data associated to our geometric objects.
		
		The other reason we have opted to include this here, is that it isolates further the problem of obtaining the height bounds envisioned by Y. Andr\'e in connection to the ``Effective Brauer-Siegel'' problem.
\end{remark}

Before stating our height bound we introduce some notation.
\begin{definition}\label{defnproblematic}
	Let $j_0\neq 0$ be a fixed singular modulus whose corresponding elliptic curve has CM by the CM field $F_0$. Given another singular modulus $j$ whose corresponding CM elliptic curve has CM given by a CM field $F_j\neq F_0$ we define
	 \begin{equation}\CP(j,j_0):=\{ v\in \Sigma_{\Q(j,j_0),f}: v(j-j_0)>0, \tilde{E}_{0,v}\text{ is supersingular}\}.
		\end{equation}We call $\CP(j)$ the set of \textbf{supersingular places of proximity} of $j$ to $j_0$.
	\end{definition}

The main output of the G-functions method in this setting is the following height bound:
\begin{theorem}\label{heightboundcm}Let $j_0:=54000$ and let $\CP(j):=\CP(j,54000)$. Then, there exist effectively computable positive constants $c_1$ and $c_2$, such that
\begin{equation}\label{eq:htboundmain}h(j)\leq c_1 ([\Q(j):\Q] +|\CP(j)| )^{c_2},\end{equation}
for all singular moduli $j$ whose corresponding elliptic curve has CM by a field other than $\Q(\sqrt{-3})$.\end{theorem}

\begin{proof} We let $f:\CE\rightarrow S$, where $S:=\mathbb{A}^1\backslash \{0,1728\}$, be the $j$-family of elliptic curves, see for example page $47$ in \cite{silvermanell}. We also fix $j_0=s_0=54000\in S(\Q)$, which is such that\footnote{See \cite[\href{https://www.lmfdb.org/EllipticCurve/2.0.3.1/144.1/CMa/2}{Elliptic Curve 144.1-CMa2}]{lmfdb}.} $E_0:=\CE_{s_0}$ is a CM elliptic curve with CM field $\Q(\sqrt{-3})$. To the pair $(f,s_0)$ we associate the local parameter $x:=j-j_0$. To this picture, as discussed in \Cref{section:backgroundgfuns}, we may associate a family of G-functions. In this case, this will be nothing but a single matrix which we denote from now on by $Y_G$.
	
Throughout this proof let $j=s\in S(\bar{\Q})$ be another point for which $\CE_s$ is CM, with $\End^{0}_{\bar{\Q}}(\CE_s)\neq \Q(\sqrt{-3})$. We consider the set $\Sigma(s):=\{v\in \Sigma_{L_s}:|x(s)|_v<\min\{1,R_v(Y_G)\}\}$ of places of the extension $L_s$ considered in \Cref{propandresupercm} with respect to which $s$ is close to $s_0$. If $\Sigma(s)=\emptyset$ we conclude as in the proof of the main height bound of \cite{papasbigboi}. From now on we assume $\Sigma(s)\neq \emptyset$ and consider its subsets $\Sigma(s)_{\arch}:=\{v\in \Sigma(s):v|\infty\}$, $\Sigma(s)_{\ord}:=\{v\in\Sigma(s):v \text{ is ordinary for }E_{0} \}$, and $\Sigma(s)_{\ssing}:=\{v\in\Sigma(s):v  \text{ is supersingular for }E_0\}$.

Since $\End^{0}_{\bar{\Q}}(\CE_s)\neq \Q(\sqrt{-3})$ we can easily see that $\Sigma(s)_{\ord}=\emptyset$, see \Cref{remarkordinaryproximity}. Similarly, from \Cref{lemproximityramified} we know that if $v\in \Sigma(s)_{\ssing}$ and $p$ is the unique prime in $\Q$ which is divisible by $v$, then $p$ is unramified in $\Q(\sqrt{-3})$. In other words, $p\neq 3$ the unique prime ramified in $\Q(\sqrt{-3})$. In particular, we may use \Cref{propandresupercm} to get a polynomial $R_{s,\ssing}\in L_s[X_{i,j}:1\leq i,j\leq 2]$ such that $\iota_v(R_{s,\ssing}(Y_G(x(s))))=0$ for all $v\in\Sigma(s)_{\ssing}$. 

The places of the set $\Sigma(s)_{\arch}$ are ``dealt with'' by the polynomial $R_{s,\arch}$ of \Cref{propbeukersarch}. The polynomial $R_{s}:=R_{s,\arch}\cdot R_{s,\ssing}$ will satisfy \begin{center}
	$\iota_v(R_{s}(Y_G(x(s))))=0$
\end{center} for all places $v\in\Sigma(s)$, thanks to our comments above. This polynomial will correspond to a global non-trivial relation among the values of the G-functions of our family at the point $s$. We may now apply Andr\'e-Bombieri's ``Hasse principle for values of G-functions'', see Chapter $VII.5$ in \cite{andre1989g}. The result now follows since $\deg(R_s)$ is bounded by the quantity on the right of \eqref{eq:htboundmain}.\end{proof}

	We note that the choice of $j_0$ here is completely arbitrary, other than the condition $j_0\neq 0,1728$, as well as the choice of the CM field $\Q(\sqrt{-3})$. A different choice of $j_0$ would alter the constant $c_1$ in \eqref{eq:htboundmain}, since $c_1$ depends on arithmetic quantities of the fiber $\CE_{s_0}$.
	
The same exact argument as above would give us the slightly more general:
\begin{theorem}\label{htboundgfunsgeneral}
	Let $f:\CE\rightarrow S$ be a $1$-parameter family of elliptic curves defined over a number field $K$ and $s_0\in S(K)$ a point whose fiber $\CE_{s_0}$ has CM by the CM field $F_0$. Assume that the induced map $S\rightarrow Y(1)$ is not-isotrivial. 
	
	Then there exist effectively computable positive constants $C_1$, $C_2$, depending on $f:\CE\rightarrow S$ and $s_0$, such that 	
	\begin{equation}
		h(s)\leq C_1 ([K(s):K]+|\CP(j(s),j(s_0))|)^{C_2},
	\end{equation}for all $s\in S(\bar{\Q})$ whose fibers have CM given by some CM field other than $F_0$, and where $j(P)$ denotes the $j$-invariant of the fiber $\CE_P$.
\end{theorem}

\subsection{The effective Brauer-Siegel problem}\label{section:brauersiegel}

 One of Andr\'e's motivations towards his seminal work in \cite{andre1989g}, seems to have been, see in particular Remark $3$ on page $201$ of loc. cit., the search for an effective version of the following classical, and famously ineffective, result of C. L. Siegel:
\begin{theorem}[Siegel,\cite{siegelog}]\label{siegelogthm}Given $\epsilon>0$ there exists a constant $c(\epsilon)>0$ such that 
	\begin{equation}\label{eq:siegeleq}
		h(K)\geq c(\epsilon) |\disc(K)|^{\frac{1}{2}-\epsilon},
	\end{equation}for all imaginary quadratic number fields $K$.
\end{theorem}

Via standard results about CM elliptic curves, \Cref{siegelogthm} may be translated to a statement about CM elliptic curves. Given a singular modulus $j$ we let $E_j$ be the corresponding CM elliptic curve and set $D(j):=\disc(\End_{\bar{\Q}}(E_j))$ to be the discriminant of the ring of $\Q$-endomorphisms of $E_j$. An effective answer to \Cref{siegelogthm} would be equivalent\footnote{We point the interested reader to the introduction of \cite{papaseffbrsieg} for more on this.} to finding an effective constant $c(\epsilon)>0$ such that 
\begin{equation}\label{eq:siegelrestated}
	[\Q(j):\Q]\geq c(\epsilon) |D(j)|^{\frac{1}{2}-\epsilon}.
\end{equation}

The proof of Theorem $1.5$ in \cite{papaseffbrsieg}, replacing the usage of the height bounds in Theorem $2.15$ there by the height bound in \Cref{heightboundcm}, leads to the existence of effectively computable constants $c_4$, $c_5$ for which 
\begin{equation}\label{eq:brauersiegel1}
	|D(j)|\leq c_4\max\{[\Q(j):\Q],[\Q(j):\Q]+|\CP(j)|\}^{c_5},
\end{equation}for all singular moduli $j$ for which $\End^{0}_{\bar{\Q}}(E_j)\neq \Q(\sqrt{-3})$. Compare, in particular, equation $(52)$ in loc. cit. with \eqref{eq:brauersiegel1}. Furthermore note here that, were we to use \Cref{propisogenouscm} instead of \Cref{heightboundcm}, we would get effective constants dealing with the $j$ with $\End^{0}_{\bar{\Q}}(E_j)=\Q(\sqrt{-3})$, albeit with a fractional power on the right of \eqref{eq:siegelrestated} slightly worse than $\frac{1}{2}$.

The obvious question that now arises is if the contribution of $|\CP(j)|$ can be made ``negligible''. What one would want here is to find effectively computable constants $C_{1}$, $C_2$, and $C_3>0$ such that, for all singular moduli $j$ with $\End^{0}_{\bar{\Q}}(E_j)\neq \Q(\sqrt{-3})$ we have
	\begin{equation}\label{eq:conjecturalbound}
		|\CP(j)|\leq C_{1}[\Q(j):\Q]^{C_2}\cdot D(j)^{\frac{1}{C_3}},\end{equation}with $C_3>c_5$.

We close off with some remarks on the constant $c_5$ of \eqref{eq:brauersiegel1}. First of all, in this case, the constant $c_5$ will be the product of the constant $c_1$ outputted by Andr\'e-Bombieri's ``Hasse-Principle'' and a constant $c_1'$ outputted by Masser-W\"ustholz's ``Endomorphism estimates'', see \cite{mwendoesti}.

In \cite{mwendoesti} Masser and W\"ustholz note that the optimal value of the constant $c_1'$ is of the magnitude $2+\epsilon$ in the case of elliptic curves. Andr\'e's method on the other hand would give $c_1=9$. We note that, following Remark $2$ on page $138$ of \cite{andre1989g}, we may even take $c_1=4$, i.e. using the height bound for ``strongly non-trivial relations'' in the vocabulary of Andr\'e's version of the ``Hasse principle'' in loc. cit.. All in all, it would therefore suffice for our purposes to have a version of \eqref{eq:conjecturalbound} with $C_3>8$.

\begin{remark}The above, perhaps naive, sketch of a strategy towards an effective version of Siegel's result, is at first glance at least, crude from the perspective of the ``G-functions method''. More precisely, the problematic quantity in the height bounds in \Cref{heightboundcm} is there to control the way that the relations outputted by \Cref{propandresupercm} depend on the place in question. A, perhaps, more reasonable approach would be to try to directly bound the number of the possible polynomials that might appear in \Cref{propandresupercm}, by, say, the same quantity as in \eqref{eq:conjecturalbound}.

This approach was, in principle at least, considered by Beukers, see in particular Theorem $5.4$ in \cite{beukers}. Using Beukers' results, the height bound in \Cref{heightboundcm} would look like \begin{equation}\label{eq:badheightbound}
	h(j)\leq c_0 [\Q(j):\Q]^4\cdot D(j)^2.
\end{equation}Were we to pair this with Masser-W\"ustholz's estimates, we would reach a clear impasse. \end{remark}

\appendix	

\bibliographystyle{alpha}
\bibliography{biblio}
\end{document}